\title[Tilting in dimension two]{Filtrations in abelian categories with a tilting object of homological dimension two}
\author{Bernt Tore Jensen, Dag Madsen and Xiuping Su}
\newtheorem{theorem}{Theorem}
\newtheorem{lemma}[theorem]{Lemma}
\newtheorem{proposition}[theorem]{Proposition}
\newtheorem{corollary}[theorem]{Corollary}
\newcommand{\lra}{\longrightarrow}
\newcommand{\ra}{\rightarrow}
\newcommand{\sdp}{\times\kern-.2em\vrule height1.1ex depth-.05ex}
\newcommand{\epi}{\lra \kern-.8em\ra}
\newcommand{\catA}{\mathcal{A}}
\newcommand{\catB}{mod B}
\newcommand{\derA}{\mathcal{D}^b(\mathcal{A})}
\newcommand{\derB}{\mathcal{D}^b(B)}
\newcommand{\G}[1]{\mathcal{G}_{#1}}
\newcommand{\F}[1]{\mathcal{F}^{#1}}
\newcommand{\K}[1]{\mathcal{K}^{#1}}
\newcommand{\E}[1]{\mathcal{E}^{#1}}
\begin{document}

\begin{abstract}
We consider filtrations of objects in an abelian category $\catA$ induced by
a tilting object $T$ of homological dimension at most two. We define three
disjoint subcategories with no maps between them in one direction, such that
each object has a unique filtation with factors in these categories. This
filtration coincides with the the classical two-step filtration induced by torsion pairs in dimension one.
We also give a refined filtration, using the derived equivalence between
the derived categories of $\catA$ and the module category of $End_\catA (T)^{op}$. The
factors of this filtration consist of kernel and cokernels of maps between objects
which are quasi-isomorphic to shifts of $End_\catA (T)^{op}$-modules via the derived
equivalence $\mathbb{R}Hom_\catA(T,-)$.
\end{abstract}
\maketitle

\section*{Introduction}

Let $k$ be a field and let $\catA$ be either a noetherian abelian $k$-category
with finite homological dimension and hom-finite bounded derived category
$\derA$, or let $\catA$ be the module category of a finite dimensional algebra
$A$. In the first case we will assume that there is a locally noetherian abelian
Grothendieck $k$-category $\catA'$ with finite homological dimension such that
$\catA\subseteq \catA'$ is the subcategory of noetherian objects. For more
information about tilting in noetherian categories, please see for example Baer
\cite{Baer} and Bondal \cite{Bondal}.

Let $T\in \catA$ be a tilting object. That is, $T\in \catA$ is an object without
self-extensions in non-zero degrees and which induces a pair of mutually inverse
derived equivalences $$F=\mathbb{R}Hom(T,-):\derA \lra \derB \mbox{ and }
G:\derB\lra \derA,$$ where $B=End(T)^{op}$ is a finite dimensional algebra
and $\derB=\mathcal{D}^b(modB)$ is the bounded derived category of finite dimensional
left $B$-modules.

Let $\tau_TM$ denote the trace of $T$ in $M$. If the projective dimension
of $T$ is one (i.e. $Ext^i_\catA(T,-)=0$ for $i>1$), then there is a canonical
short exact sequence $$0 \lra \tau_TM \lra M \lra M/\tau_TM \lra 0,  \;    \;
\;  \;  \;  \;  \;  (1)$$ where $\tau_TM\in FacT$, $M/\tau_TM\in RejT$,  $FacT$
is the torsion subcategory of objects $N$ with a surjective homomorphism
$T^n\rightarrow N$ for some $n>0$, and $RejT$ is the torsion free
subcategory of objects $N$ with $Hom(T,N)=0$. It is well known that
the sequence (1) generalises when $T$ has higher homological dimension
to a torsion theory in the derived category,
but our interest is in constructing filtrations in the abelian category. Our aim is to generalize the
sequence (1) to abelian categories with a tilting object with higher
homological dimension. As a first step towards that goal, we present in this paper the
solution for homological dimension two, that is, to abelian categories $\catA$ with a
tilting object $T$ with $H^iFX=Ext^i_\catA(T,X)=0$ for all $X\in \catA$ and $i\geq 3$.
The sequence will be replaced by a filtration with three terms,  and will
coincide with the sequence (1) when the projective dimension of $T$ is one.

Let $F^i=H^iF=Ext^i_\catA(T,-)$ and let $G_i=H_iG$. Let $\G i$ denote
the subcategory of $\catB$ consisting of modules $X$ with $G_jX=0$ for
$i\neq j$. Similarly, we define $\F i$ to be the subcategory of $\catA$
consisting of objects $X$ with $F^jX=0$ for $i\neq j$.
It is clear that the categories $\F i$ are closed under extensions and pairwise disjoint,
and there are induced equivalences $F_{|\F i}:\F i \lra \G i[-i]$ and $G_{|\G i}:\G i \lra \F i[i]$,
where $[i]$ denotes the usual shift in the derived category. In other
words, $$\bigcup \F i\subseteq \catA$$ are objects in $\catA$ which are
$B$-modules via the derived equivalence. In dimension one we have
$FacT=\F 0$ and $RejT=\F 1$, and the short exact sequence (1)
shows how to canonically reconstruct $\catA$ using the subcategories $\F i$.
A natural thing to try in higher homological dimensions is to consider
filtrations with subfactors in the disjoint subcategories $\F i$, see Tonolo \cite{Tonolo}
for this approach. Unfortunately, such filtrations fail to exist in general,
even in dimension two, and we give an example in Section \ref{ext}. The problem
is that the categories $\F i$ are two small to filter any object in $\catA$.
To ensure that any object can be filtered we enlarge the categories $\F i$
to categories $\E i$, which are still closed under extensions and pairwise disjoint,
and are large enough to allow a unique filtration for any object in $\catA$.
Moreover, the categories $\E i$ are explicitly constructed, using the categories
$\F i$.

Let $\K 0$ be the full subcategory of objects which are cokernels of
monomorphisms from objects in $\F 2$ to objects in $\F 0$,  let
$\K 2$ be the full subcategory of objects which are kernels of
epimorphisms from objects in $\F 2$ to objects in $\F 0$, and let
$\K 1 = \F 1$. Note that, $\F 0\subseteq \K 0 \subseteq KerF^2$ and
$\F 2 \subseteq \K 2\subseteq KerF^0$.  Let $\E i$ be the extension
closure of $\K i$, that is, $\E i\subseteq \catA$ is the smallest
subcategory closed under extensions, and containing $\K i$. Note
that $\E 1 = \K 1 = \F 1$. The categories $\E i$ have the following
key properties.

\begin{theorem} \label{extra} If $j>i$, then $Hom(\E i,\E j)=0$. In particular, the
subcategories $\E 0$, $\E 1$ and $\E 2$ are pairwise disjoint.
\end{theorem}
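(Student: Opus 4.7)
The strategy is to bootstrap from $\F i$ up through $\K i$ to $\E i$, with the base vanishing supplied by the derived equivalence. The first reduction is formal: for any $Y$, left-exactness of $\hom(-,Y)$ makes the class $\{X:\hom(X,Y)=0\}$ closed under extensions in $\catA$, and dually for $\hom(X,-)$. Since $\E i$ is by definition the extension closure of $\K i$, a straightforward induction on the length of an extension filtration reduces the claim to the finite check $\hom(\K i,\K j)=0$ for the three pairs $(i,j)\in\{(0,1),(0,2),(1,2)\}$.

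The main analytic input is the base vanishing $\hom(\F i,\F j)=0$ for $j>i$. Because an object $X\in\F i$ has $F^jX=0$ for $j\neq i$, the complex $FX$ is isomorphic in $\derB$ to the shifted module $F^iX[-i]$, and the derived equivalence therefore yields
$$\hom_\catA(X,Y)\cong \hom_\derB\!\bigl(F^iX[-i],\,F^jY[-j]\bigr)\cong \ext^{i-j}_B(F^iX,F^jY),$$
which vanishes whenever $i-j<0$.

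Given this, the three remaining cases follow by wrapping the vanishing around the short exact sequences defining $\K 0$ and $\K 2$. For $(1,2)$: write $Y\in\K 2$ as a kernel $0\to Y\to A'\to B'\to 0$ with $A'\in\F 2$, so that applying $\hom(X,-)$ embeds $\hom(X,Y)$ into $\hom(X,A')=0$. The $(0,1)$ case is dual, using that $X\in\K 0$ appears as the cokernel in $0\to A\to B\to X\to 0$ with $B\in\F 0$: applying $\hom(-,Y)$ embeds $\hom(X,Y)$ into $\hom(B,Y)=0$. The $(0,2)$ case combines both reductions, giving the chain $\hom(X,Y)\hookrightarrow \hom(B,Y)\hookrightarrow \hom(B,A')=0$ with $B\in\F 0$ and $A'\in\F 2$. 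The only substantive ingredient is the base vanishing; I expect no real obstacle beyond keeping the extension-closure induction honest and ensuring the two reductions in the $(0,2)$ case are composed in the correct order.
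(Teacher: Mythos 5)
Your proof is correct, but it takes a genuinely different and more elementary route than the paper. The paper obtains the theorem as a byproduct of machinery built for the uniqueness of the filtration in Theorem \ref{Theo1}: it first proves $\ker\hom(\E 0,-)=\ker F^0$ (via the identification $\K 0 = FacT$) and $\E 2=\ker F^0\cap\ker\hom(\E 1,-)$ (via the trace and pullback analysis of Lemma \ref{keylemma}), and then reads off the vanishing. You instead argue directly: the extension-closure reduction to $\hom(\K i,\K j)=0$ is sound (the classes $\{X:\hom(X,Y)=0\}$ and $\{Y:\hom(X,Y)=0\}$ are extension-closed, so they contain $\E i$ and $\E j$ as soon as they contain $\K i$ and $\K j$); the base vanishing $\hom(\F i,\F j)\cong\ext^{i-j}_B(F^iX,F^jY)=0$ for $j>i$ is an immediate consequence of the derived equivalence and the vanishing of negative Ext groups; and the three case reductions via the defining short exact sequences of $\K 0$ and $\K 2$ use only left-exactness of $\hom$ in each variable, with the embeddings composed correctly in the $(0,2)$ case. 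Your argument is self-contained and needs none of Section 3's structural lemmas, which makes it arguably the cleaner proof of this statement in isolation; what it does not deliver is the sharper information the paper extracts along the way, namely the exact descriptions $\ker\hom(\E 0,-)=\ker F^0$ and $\E 2=\ker F^0\cap\ker\hom(\E 1,-)$, which the paper needs anyway to prove uniqueness and functoriality of the three-step filtration.
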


In abelian categories with tilting object of
homological dimension two, we have the following filtration,
generalizing the short exact sequence (1).

\begin{theorem} \label{Theo1}
Let $T\in \catA$ be a tilting object with homological dimension at most two
and let $X\in \catA$. Then there is a unique and functorial filtration $0=X_0 \subseteq
X_1 \subseteq X_2 \subseteq X_3 = X$ with $X_{i+1}/X_{i}\in \E i$ for $i=0,1,2$.
\end{theorem}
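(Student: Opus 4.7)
The uniqueness and functoriality should follow from Theorem \ref{extra} by a standard Hom-vanishing argument. Since $\E 1$ and $\E 2$ are closed under extensions, the vanishings $\hom(\E 0, \E 1) = 0 = \hom(\E 0, \E 2)$ together with the long exact sequence of Hom give $\hom(\E 0, \mathcal{E}) = 0$, where $\mathcal{E}$ denotes the extension closure of $\E 1 \cup \E 2$. Given two such filtrations $X_\bullet$ and $X'_\bullet$ of $X$, the composite $X_1 \hookrightarrow X \twoheadrightarrow X/X'_1$ vanishes, since its target fits in a short exact sequence $0 \to X'_2/X'_1 \to X/X'_1 \to X/X'_2 \to 0$ with ends in $\E 1$ and $\E 2$; hence $X_1 \subseteq X'_1$, and symmetry gives $X_1 = X'_1$. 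Passing to $X/X_1$ and applying $\hom(\E 1, \E 2) = 0$ analogously forces $X_2 = X'_2$. Functoriality for a morphism $f : X \to Y$ is obtained by applying the same vanishing to each composite $X_i \hookrightarrow X \xrightarrow{f} Y \twoheadrightarrow Y/Y_i$.

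For existence, my plan is to build the filtration from two canonical morphisms arising from the derived adjunction. For any $Y \in \catA$, the truncation triangles in $\derB$ give, under $G$, maps $\eta_Y : G(F^0 Y) \to Y$ and $\epsilon_Y : Y \to G(F^2 Y[-2])$; both $G(F^0 Y) \in \F 0$ and $G(F^2 Y[-2]) \in \F 2$ lie in $\catA$, so these are genuine morphisms in $\catA$. A direct long exact sequence computation of $F^\bullet$ for $\eta_Y$, using that the unit $F^0 Y \to F^0 G(F^0 Y)$ is an isomorphism, shows $\ker(\eta_Y) \in \F 2$; consequently $\im(\eta_Y)$ is the cokernel of a monomorphism $\F 2 \hookrightarrow \F 0$, and therefore lies in $\K 0 \subseteq \E 0$. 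My intended construction is nested: first (Step 1) produce $X_2 \subseteq X$ with $X_2 \in \ker F^2$ and $X/X_2 \in \E 2$; then (Step 2) set $X_1 := \im(\eta_{X_2})$. A further long exact sequence computation on $0 \to X_1 \to X_2 \to X_2/X_1 \to 0$, using $F^2 X_1 = 0 = F^2 X_2$ and the description of $X_1 \in \K 0$, yields $F^0(X_2/X_1) = 0 = F^2(X_2/X_1)$, hence $X_2/X_1 \in \F 1 = \E 1$.

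The main obstacle is Step 1: arranging $X/X_2$ to lie genuinely in $\E 2$ rather than merely to embed into an object of $\F 2$. The naive choice $X_2 := \ker \epsilon_X$ makes $X/X_2$ a subobject of $G(F^2 X[-2])$, but long exact sequence analysis shows the cokernel of this embedding has vanishing $F^2$ and generally nonzero $F^0$ and $F^1$, so the embedding does not exhibit $X/X_2$ as an object of $\K 2$, and it is unclear a priori that $X/X_2$ even admits a $\K 2$-filtration. To resolve this, I would iteratively enlarge the target, replacing $G(F^2 X[-2])$ by an extension of it by an $\F 0$-object tailored to absorb the residual $F^0$ of the cokernel; this yields a refined map $X \to E$ with $E \in \K 2$, and iterating should terminate by finiteness of the relevant $B$-modules. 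Alternatively, one could first extract a canonical subobject of $X$ in $\E 0$ via $\eta_X$, reduce to the case $F^0 X = 0$, and then perform a dual construction on the quotient, exploiting the symmetry between $\F 0$ and $\F 2$ in the definitions of $\K 0$ and $\K 2$.
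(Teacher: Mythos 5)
Your uniqueness and functoriality argument is correct and is essentially the one the paper gives: both reduce everything to the vanishing $\hom(\E i,\E j)=0$ for $j>i$ (Theorem \ref{extra}) together with closure of the $\E i$ under extensions. The existence argument is where the problems lie, and there are two. The first you flag yourself: Step 1, producing $X_2$ with $X/X_2\in\E 2$, is precisely the hard point. The paper handles it (Lemma \ref{keylemma}) by first splitting off the traces $\tau_{\E 0}X$ and $\tau_{\E 1}(X/\tau_{\E 0}X)$ and then iterating the functor $J^1_1=G_1F^1$ on what remains, using the descent statement of Lemma \ref{desclemma} to show the iteration terminates in an object filtered by $\K 2$. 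Your ``absorb the residual $F^0$ by enlarging the target'' idea would have to reprove exactly this termination, and as written it is only a plan.

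The second problem is an actual error rather than a gap: Step 2 is false. Your $X_1=\im(\eta_{X_2})$ is the trace $\tau_T X_2$ of $T$, which is in general strictly smaller than $\tau_{\E 0}X_2$, because $\K 0=FacT$ is not closed under extensions (Section \ref{ext}). In your long exact sequence one finds $F^0(X_2/X_1)\cong \ker\bigl(F^1X_1\to F^1X_2\bigr)$; this is an object of $\G 0$ embedded in $F^1X_1\cong F^2J^1_2X_2\in\G 2$, but $\G 2$ is not closed under submodules, so you cannot conclude that it lies in $\G 0\cap\G 2=0$. (The paper only draws that conclusion in Lemma \ref{desclemma} under the extra hypothesis $d(J^1_1X)=d(X)$, which forces $F^1Z\to F^1Y$ to be the zero map.) The paper's own example defeats your step: for $\Delta:1\rightrightarrows 2\to 3$, $A=k\Delta/\rad(k\Delta)^2$, $T=DA$, the module $M$ with dimension vector $(1,1,0)$ and both arrows acting as the identity lies in $\ker F^2$ and in $\E 0$ (it is an extension of $S_1\in FacT$ by $S_2\in FacT$), so the correct filtration has $X_1=X_2=M$; but $\tau_TM=\im(\eta_M)=S_2$, and $M/S_2\cong S_1$ has $F^0S_1=\hom(T,S_1)\neq 0$, hence $M/\tau_TM\notin\F 1$. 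To repair the construction you must take $X_1=\tau_{\E 0}X_2$ as in Lemma \ref{keylemma}(b), and showing that the resulting quotient lies in $\E 1$ again requires the trace-and-iterate machinery rather than a single application of the unit map $\eta$.
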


If the homological dimension of $T$ is one, then $\E 0=\F 0$ and $\E 2=0$,
and so we recover the sequence (1) as a corollary.

\begin{corollary}
Let $T\in \catA$ be a tilting object with homological dimension one and let
$X\in \catA$. Then there is a canonical short exact sequence $0 \lra X_1
\lra X \lra X/X_1 \lra 0$ with $X_1\in \E 0$ and $X/X_1\in \E 1$.
\end{corollary}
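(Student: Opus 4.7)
The plan is to deduce the corollary directly from Theorem \ref{Theo1}, by showing that under the hypothesis of homological dimension one, $\E 2=0$ and $\E 0=\F 0$, so the three-step filtration collapses to a two-step filtration, i.e.\ to a short exact sequence.

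First I would record the immediate consequence of the hypothesis: $F^i=\ext^i_\catA(T,-)=0$ for all $i\geq 2$, and in particular $F^2X=0$ for every $X\in \catA$. Since $\F 2$ is by definition the full subcategory of objects $X$ with $F^jX=0$ for every $j\neq 2$, any $X\in \F 2$ satisfies $F^jX=0$ for all $j$; hence $FX=0$ in $\derB$, and then $X=0$ because $F$ is a derived equivalence. Thus $\F 2=0$.

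With $\F 2=0$ in hand, the definitions of $\K 0$ and $\K 2$ simplify. A monomorphism from an object of $\F 2=0$ into $Y\in \F 0$ is forced to be $0\hookrightarrow Y$, whose cokernel is $Y$; hence $\K 0=\F 0$. An epimorphism from an object of $\F 2=0$ onto some $Y\in \F 0$ forces $Y=0$, whose kernel is $0$; hence $\K 2=0$. Since $\E i$ is the extension closure of $\K i$, this yields $\E 0=\F 0$ and $\E 2=0$, while $\E 1=\F 1$ by definition.

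Finally, I would apply Theorem \ref{Theo1} to $X$ to obtain a unique functorial filtration $0=X_0\subseteq X_1\subseteq X_2\subseteq X_3=X$ with $X_{i+1}/X_i\in \E i$ for $i=0,1,2$. Because $\E 2=0$, the top quotient $X_3/X_2$ vanishes, so $X_2=X$. What remains is the short exact sequence $0\to X_1\to X\to X/X_1\to 0$ with $X_1\in \E 0=\F 0=FacT$ and $X/X_1\in \E 1=\F 1=RejT$, which is exactly the classical sequence (1); canonicity is inherited from the uniqueness and functoriality in Theorem \ref{Theo1}. There is no real obstacle in this argument: all the substantive work is contained in Theorems \ref{extra} and \ref{Theo1}, and the corollary is a straightforward specialization.
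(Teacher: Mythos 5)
Your proposal is correct and takes essentially the same route as the paper, which derives the corollary from Theorem \ref{Theo1} via the observation that in homological dimension one $\E 0=\F 0$ and $\E 2=0$; you have simply filled in the details (that $\F 2=0$ forces $\K 0=\F 0$ and $\K 2=0$, and that $\F 0$ is already extension-closed) which the paper leaves implicit.
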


We also prove the existence of the following refined filtration.
The proof is constructive and allows us to compute the filtration
in concrete examples.

\begin{theorem} \label{Theo2} Let $T\in \catA$ be a tilting module with
homological dimension two and let $X\in \catA$. Then there is a
filtration $(0) = Z_0 \subseteq ... \subseteq Z_n \subseteq Y_n \subseteq ...
\subseteq Y_0 = X$ with $Y_i/Y_{i+1}\in \K 2$, $Z_{i+1}/Z_i \in \K 0$ and
$Y_n/Z_n\in \K 1$.
\end{theorem}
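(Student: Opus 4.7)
The plan is to derive Theorem \ref{Theo2} from Theorem \ref{Theo1} by refining each $\E i$-factor of the coarse filtration into a chain of $\K i$-subfactors. First, I would apply Theorem \ref{Theo1} to obtain $0 \subseteq X_1 \subseteq X_2 \subseteq X$ with $X_1 \in \E 0$, $X_2/X_1 \in \E 1 = \K 1$ and $X/X_2 \in \E 2$. Setting $Z_n := X_1$ and $Y_n := X_2$ reduces the theorem to two independent refinement problems: produce an ascending chain $0 = Z_0 \subseteq Z_1 \subseteq \cdots \subseteq Z_n = X_1$ with successive quotients in $\K 0$, and a descending chain $X_2 = Y_n \subseteq Y_{n-1} \subseteq \cdots \subseteq Y_0 = X$ with $Y_i/Y_{i+1} \in \K 2$. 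The two chains can be padded with repeated terms if necessary so that they share the common index $n$, and the middle factor $Y_n/Z_n = X_2/X_1$ is then automatically in $\K 1$.

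Both refinements follow the same template. Since $\E 0$ is by definition the extension closure of $\K 0$, the object $X_1$ is built from $\K 0$-objects by a finite sequence of extensions, and I would argue by induction on the number of such extensions: choose a short exact sequence $0 \to Z' \to X_1 \to Z'' \to 0$ with $Z'' \in \K 0$ and $Z' \in \E 0$ of strictly smaller length, then recurse on $Z'$. A dual argument applied to $X/X_2 \in \E 2$ yields an analogous chain with $\K 2$-quotients, which then lifts along $X \twoheadrightarrow X/X_2$ to a chain $X_2 \subseteq \cdots \subseteq X$ inside $X$. At this abstract level the existence of both refinements is immediate from the definition of extension closure, and Theorem \ref{extra} guarantees that the different factors cannot interact, which is morally why the chains can be glued as claimed.

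The main obstacle, and where the constructive character of the proof advertised in the statement must be realised, is producing the witnessing short exact sequence explicitly at each inductive step so that the filtration can actually be computed in examples. For the bottom chain, one must canonically identify inside a given $\E 0$-object a pair $A \in \F 2$ and $B \in \F 0$ together with a monomorphism $A \hookrightarrow B$ whose cokernel appears as a quotient of the object; and dually extract a $\K 2$-subobject from above in an $\E 2$-object. The vanishings $\E 0 \subseteq \ker F^2$ and $\E 2 \subseteq \ker F^0$, both immediate from the long exact sequence for $\hom(T,-)$ applied to the defining short exact sequences of $\K 0$ and $\K 2$ together with closure of $\ker F^i$ under extensions, constrain the allowed $F^i$-data at each stage and should be what makes such an explicit extraction possible. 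Termination of the iteration is then guaranteed by strict decrease of the length of the coarse $\E 0$-filtration (respectively $\E 2$-filtration) produced by Theorem \ref{Theo1}.
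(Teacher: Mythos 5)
Your argument, granting Theorem \ref{Theo1}, does yield the literal statement: an object of $\E i$ admits a finite filtration with factors in $\K i$ simply because the class of such objects contains $\K i$ and is closed under extensions, and the two chains can be padded and glued around the middle factor $X_2/X_1\in \K 1$. But there are two substantive problems with this route. First, it inverts the paper's logical order and as written is circular: in the paper the existence half of Theorem \ref{Theo1} is \emph{deduced from} Theorem \ref{Theo2} (Corollary \ref{cor}). There is a second, independent proof of Theorem \ref{Theo1} (Theorem \ref{th} via Lemma \ref{keylemma}), so your derivation can be repaired, but that proof itself runs on exactly the machinery that constitutes the paper's direct proof of Theorem \ref{Theo2} --- the canonical subobjects coming from $J^0_0X\lra X\lra J^2_2X$ and the termination argument of Lemma \ref{desclemma} --- so nothing is saved; the hard content is merely hidden inside the citation. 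Second, your third paragraph identifies the ``main obstacle'' in the wrong place: the refinement of an $\E i$-object into a $\K i$-filtration is tautological (as you yourself note in paragraph two), it needs no explicit extraction of a monomorphism $\F 2\hookrightarrow \F 0$, and the genuinely constructive and genuinely difficult steps --- producing the canonical $\K 0$-subobject and $\K 2$-quotient of an arbitrary $X$, and proving the iteration terminates --- are never addressed.

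For comparison, the paper's proof is direct and does not pass through $\E i$ at all. From the five-term complex $0\lra J^1_2X\lra J^0_0X\lra X\lra J^2_2X\lra J^1_0X\lra 0$ of Lemma \ref{seqlemma} one takes $Z=\mathrm{im}(J^0_0X\lra X)$, which lies in $\K 0$, and $Y=\ker(X\lra J^2_2X)$, with $X/Y\in \K 2$ and $Y/Z\cong J^1_1X$ (Lemma \ref{induction}); one then iterates on the middle factor $J^1_1X$, and termination follows because $d(J^1_1X)\leq d(X)$ with equality forcing $J^1_1X\in \F 1$ (Lemma \ref{desclemma}). Note also that the resulting filtration is interleaved, $Z_i\subseteq Y_i$ at every stage with both a $\K 0$-factor and a $\K 2$-factor peeled off per step, rather than two separate chains meeting at $X_1$ and $X_2$; this is what makes the paper's version computable in examples and what allows Theorem \ref{Theo1} to be read off from it afterwards.
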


By the uniqueness in Theorem \ref{Theo1}, we remark that
$X_1=Z_n$ and $X_2=Y$. Also, the theorem and its proof show how the
the objects in $\E i$ are constructed from the categories $\F i$.
An important consequence is an explicit and
canonical reconstruction of $\catA$ using the subcategories $\F i$.

Examples of tilting objects $T$ satisfying the hypothesis in the theorems
above include the $k$-dual $DA$ for an algebra of global dimension two.
Another class of examples are tilting sheaves over smooth projective
surfaces, for example over the projective plane.

The remainder of this paper  is organized as follows.  In Section 1 we
analyse the equivalence $GF\cong 1_\catA$. The proof of Theorem \ref{Theo2}
is given in Section 2, and the proof of
Theorem \ref{Theo1} is given in Section 3. In Section 4 we discuss the
extension closure of $FacT$ where $T$ is a tilting module in a
category $\catA$ of modules of a finite dimensional
algebra.

\section{Some homological algebra}

Let $T\in \catA$ be a tilting object. We will assume that $T$ has homological
dimension at most two, that is $F^iX=Ext_\catA^{i}(T,X)=0$ for all $X\in \catA$
and $i\geq 3$. In this section we prove some general homological properties for
tilting objects with homological dimension two.

\begin{lemma} \label{prevlemma}
If $T$ has homological dimension two, then $G_iM=0$ for all $M\in modB$ and $i\geq 3$.
\end{lemma}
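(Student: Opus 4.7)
My plan is to assume for contradiction that $n:=\max\{i\geq 0:G_iM\neq 0\}\geq 3$; such a maximum exists because $GM\in\derA$ has bounded cohomology. The main tool is the hypercohomology spectral sequence
$$E_2^{p,q}=F^pG_{-q}M \;\Longrightarrow\; H^{p+q}(FGM)=H^{p+q}(M),$$
whose abutment is $M$ concentrated in total degree $0$, since $FG\cong 1_{\derB}$. Because $F^p=0$ for $p\notin\{0,1,2\}$, any differential $d_r$ with $r\geq 3$ has source or target outside the strip $0\leq p\leq 2$ and vanishes; hence $E_\infty=E_3$.

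First I would examine the column $q=-n$. For each $p\in\{0,1,2\}$ the total degree $p-n$ is at most $-1$, so $E_\infty^{p,-n}=0$. At the positions $(0,-n)$ and $(1,-n)$ no differential contributes: incoming ones start from $p<0$, and the outgoing $d_2$ lands in column $-n-1$, which is zero by maximality of $n$. Thus $F^0G_nM=F^1G_nM=0$, forcing $G_nM\in\F 2$. At $(2,-n)$ the only nontrivial differential is the incoming $d_2$ from $(0,-n+1)=F^0G_{n-1}M$; since $E_\infty^{2,-n}=0$, it must be surjective. Symmetrically, at $(0,-n+1)$ the total degree $-n+1\leq -2$ abuts to $0$, forcing the same $d_2$ to be injective. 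Therefore
$$d_2\colon F^0G_{n-1}M\xrightarrow{\;\sim\;}F^2G_nM$$
is an isomorphism of $B$-modules.

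Next I would exploit the equivalence $F^2\colon\F 2\to\G 2$ induced by $F$. Since $G_nM\in\F 2$, the module $N:=F^2G_nM$ lies in $\G 2$, and in particular $G_0N=T\otimes_B N=0$ under the standard identification $G\cong T\otimes_B^L(-)$. Transporting through the isomorphism, $T\otimes_B\hom_\catA(T,G_{n-1}M)=0$. The counit $\varepsilon\colon T\otimes_B\hom_\catA(T,G_{n-1}M)\to G_{n-1}M$ has image equal to the trace $\tau_TG_{n-1}M$, so this trace vanishes; but any nonzero morphism $T\to G_{n-1}M$ would contribute a nonzero image to the trace, so $\hom_\catA(T,G_{n-1}M)=F^0G_{n-1}M=0$. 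Going back through the $d_2$ isomorphism, $F^2G_nM=0$, and combined with $F^0G_nM=F^1G_nM=0$ this yields $FG_nM=0$; the equivalence then forces $G_nM=0$, contradicting the choice of $n$.

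The delicate step is ensuring that the $d_2$'s identified above are the only differentials affecting the relevant $E_\infty$-positions; the hypothesis $F^p=0$ for $p\geq 3$ confines the spectral sequence to the strip $0\leq p\leq 2$ and thereby annihilates all higher $d_r$ uniformly, which is what makes the column argument succeed with only the $E_2$ page of differentials.
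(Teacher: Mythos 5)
Your proof is correct, but it takes a genuinely different route from the paper's. The paper disposes of the lemma in a few lines by applying $F$ to the truncation triangle $\tau_{\geq 3}GM \to GM \to \tau_{<3}GM$ and observing that, since $F$ has cohomological amplitude $[0,2]$, the complex $F\tau_{\geq 3}GM$ has no cohomology in degrees where $M$ or $F\tau_{<3}GM$ lives, forcing $F\tau_{\geq 3}GM=0$ and hence $\tau_{\geq 3}GM=0$ by conservativity of $F$. You instead run the full hypercohomology spectral sequence $E_2^{p,q}=F^pG_{-q}M\Rightarrow H^{p+q}(M)$ --- precisely the alternative the authors allude to when they write that ``several of the proofs can be done quite efficiently using spectral sequences'' but that they prefer elementary derived-category arguments. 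Your degeneration and column analysis are sound ($E_3=E_\infty$ because the strip $0\leq p\leq 2$ kills all $d_r$ with $r\geq 3$), and the extracted isomorphism $F^0G_{n-1}M\cong F^2G_nM$ is a genuine bonus: it is the spectral-sequence shadow of the edge maps that reappear later in Lemma \ref{seqlemma}. Two remarks. First, your endgame leans on the identification $G_0F^0Y\cong T\otimes_B\hom_\catA(T,Y)$ with counit image equal to the trace $\tau_TY$; this is exactly how the paper operates in Section \ref{ext} for $\catA=\mathrm{mod}\,A$, but in the general noetherian setting one should either invoke the tensor description of $G$ (which the paper also uses, e.g.\ right exactness of $G_0$ in Lemma \ref{FacT}) or note directly that every map $T\to Y$ factors through the canonical map $G_0F^0Y\to Y$ via the adjunction, so that $G_0F^0Y=0$ forces $F^0Y=0$. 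Second, the contradiction can be reached more cheaply once you know $G_nM\in\F 2$: the canonical nonzero map $G_nM[n]\to GM$ transports under the equivalence $F$ to a map $F^2(G_nM)[n-2]\to M$ that must vanish for degree reasons since $n-2\geq 1$; this shortcut is essentially the paper's argument in disguise. Neither remark is a gap --- the proof stands as written.
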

\begin{proof}
Let $M\in mod B$. There is an exact triangle $$\tau_{\geq 3} GM
\longrightarrow GM \longrightarrow \tau_{<3}GM \longrightarrow \tau_{\geq 3}
GM[1],$$ where $\tau$ denotes truncation.  If we apply $F$ we get the triangle
$$F\tau_{\geq 3} GM \longrightarrow M \longrightarrow F\tau_{<3} GM
\longrightarrow F\tau_{\geq 3} GM[1]$$ Since $T$ has homological dimension
$2$, we see that $H^iF\tau_{\geq 3} GM=0$ for $i\geq 0$. So the map
$M\longrightarrow  F\tau_{<3} GM$ induces isomorphisms in cohomologies, and
is therefore a quasi-isomorphism. This shows that $F\tau_{\geq 3} GM=0$ and therefore
$\tau_{\geq 3} GM=0$. Hence $G_jM=0$ for all $j \geq 3$.
\end{proof}

We let $J^i_j=G_jF^i$. Using the quasi-isomorphism $GF(X)\cong X$ for
$X\in \catA$ we get the following double complex.

\begin{lemma} \label{complexlemma}
\label{double} Let $X\in \catA$. Then $X$ is quasi-isomorphic to the total complex
of a double complex $$\xymatrix{& & & & \cdots \ar[r] & T^2_2 \ar[r] \ar[d]
\ar@{=}[dl] & T^2_1 \ar[r] & T^2_0 \ar[r] & 0 \\ & & \cdots \ar[r] &
T^1_2 \ar[r] \ar[d] & T^1_1 \ar[r] \ar@{=}[dl] & T^1_0 \ar[r] & 0 & &
&  \\ \cdots \ar[r] & T^0_2 \ar[r] & T^0_1 \ar[r] & T^0_0 \ar[r] & 0
& & & & }$$ where the horizontal homology is $J^i_jX$ for all $0 \leq i \leq 2$ and $j\geq 0$,
the double lines indicate total degree $0$, and $T^i_j$ is a finite direct sum of
summands of $T$.
\end{lemma}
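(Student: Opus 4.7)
The plan is to lift the derived quasi-isomorphism $X \simeq GFX$ to the chain level by resolving $FX$ as a double complex of finitely generated projective $B$-modules whose rows are projective resolutions of the cohomologies $F^iX$, and then applying $G$ termwise.

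First, since $T$ has homological dimension at most two, $FX\in\derB$ has cohomology concentrated in degrees $0,1,2$, with $H^iFX=F^iX$. For each $i\in\{0,1,2\}$, I would choose a projective resolution
\[ \cdots \to Q^i_2 \to Q^i_1 \to Q^i_0 \to F^iX \to 0 \]
by finitely generated projective $B$-modules.

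Next, I would assemble these three resolutions into a single double complex $Q^{\bullet,\bullet}$ of finitely generated projective $B$-modules, placing $Q^i_j$ so as to sit in total cohomological degree $i-j$. The horizontal (within-row) differentials are those of the resolutions. The vertical (between-row) differentials are constructed by lifting the connecting morphisms of the truncation triangles
\[ F^0X \to FX \to \tau_{\ge 1}FX \to F^0X[1] \quad\text{and}\quad F^1X[-1] \to \tau_{\ge 1}FX \to F^2X[-2] \to F^1X, \]
which are elements of suitable $\mathrm{Ext}$ groups between the $F^iX$, to chain maps between the projective resolutions in the standard Cartan--Eilenberg manner. The resulting total complex $\mathrm{Tot}(Q^{\bullet,\bullet})$ is then a complex of projectives quasi-isomorphic to $FX$ in $\derB$.

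Finally, I would apply $G$ termwise and set $T^i_j=G(Q^i_j)$. Since $G(B)=T$ and each $Q^i_j$ is a summand of some $B^n$, every $T^i_j$ is a finite direct sum of summands of $T$. The horizontal homology of the $i$-th row at position $j$ is the $j$-th homology of $G(Q^i_\bullet)$, which by definition of the left derived functor of $G$ equals $G_jF^iX=J^i_jX$. The total complex of $T^{\bullet,\bullet}$ equals $G$ applied to $\mathrm{Tot}(Q^{\bullet,\bullet})$, and since the latter is a complex of projectives quasi-isomorphic to $FX$, the former represents $GFX\simeq X$ in $\derA$.

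The main obstacle is Step~2: constructing the vertical differentials of $Q^{\bullet,\bullet}$ so that the total complex is genuinely quasi-isomorphic to $FX$. This amounts to a Cartan--Eilenberg-type resolution, lifting the extension data of $FX$ (the connecting morphisms of its truncation filtration, viewed as Ext classes between the $F^iX$) to chain-level morphisms between projective resolutions while maintaining anti-commutativity with the resolution differentials. The remainder is a routine application of the derived equivalence and the computation of derived functors via projective resolutions.
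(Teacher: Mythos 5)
Your proof follows essentially the same route as the paper's: resolve each cohomology $F^iX$ of $FX$ by finitely generated projective $B$-modules, assemble these resolutions into a double complex whose total complex is quasi-isomorphic to $FX$, and apply $G$ termwise, using $G(B)\cong T$ so that each $T^i_j$ is a sum of summands of $T$ and the row homologies compute $G_jF^iX=J^i_jX$. The one delicate point you flag --- realizing the gluing data of the truncation tower as genuine vertical differentials of a bicomplex --- is treated as equally routine in the paper, and in any case the later arguments only use the row filtration of the total complex, so this causes no harm.
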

\begin{proof}
The complex $FX$ has cohomology in degrees $0$, $1$ and $2$.
By taking projective resolutions of the cohomology, we construct
a double complex
$$\xymatrix{
& & & & \cdots \ar[r] & P^2_2 \ar[r] \ar[d] \ar@{=}[dl] & P^2_1 \ar[r] &
P^2_0 \ar[r] & 0 \\ & & \cdots \ar[r] & P^1_2 \ar[r] \ar[d] & P^1_1
\ar[r] \ar@{=}[dl] & P^1_0 \ar[r] & 0 & & &  \\ \cdots \ar[r] & P^0_2
\ar[r] & P^0_1 \ar[r] & P^0_0 \ar[r] & 0 & & & & }$$ with horizontal
cohomology $H^*FX$ and total complex quasi-isomorphic to $FX$. Then
the lemma follows by applying the functor $G_0$ and using the
isomorphism $GHom_\catA(T,T)=G_0Hom_\catA(T,T)\cong T$.
\end{proof}

We show that the horizontal homology of the double complex of Lemma
\ref{complexlemma} vanishes almost everywhere.

\begin{lemma}
We have $J^i_jX=0$ for
\begin{itemize}
\item[a)] $j>2$,
\item[b)] $i=0, j=1,2$, and
\item[c)] $i=2,j=0,1$.
\end{itemize}
\end{lemma}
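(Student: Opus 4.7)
Part (a) I would dispatch immediately: since $F^iX$ is a $B$-module, Lemma \ref{prevlemma} applied to $M = F^iX$ gives $J^i_jX = G_j(F^iX) = 0$ for $j > 2$.

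For (b) and (c) my plan is to invoke the Grothendieck spectral sequence for the composition $G\circ F$; since $GF \cong \mathrm{id}_{\catA}$, it reads
\[
E_2^{p,q} \;=\; G_{-p}(F^qX) \;=\; J^{q}_{-p}X \;\Longrightarrow\; H^{p+q}(X[0]),
\]
the right-hand side being $X$ in total degree $0$ and zero elsewhere. Because $F^qX = 0$ for $q \notin \{0,1,2\}$ and, by part (a), $G_{-p}(F^qX) = 0$ for $-p > 2$, the nonzero part of the $E_2$-page is confined to the $3 \times 3$ block $-2 \le p \le 0$, $0 \le q \le 2$. The differentials take the usual form $d_r \colon E_r^{p,q} \to E_r^{p+r,\,q-r+1}$.

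The key observation is that this block is so narrow that the four corner entries corresponding to the desired vanishings --- namely $J^2_0X$ at $(0,2)$, $J^2_1X$ at $(-1,2)$, $J^0_1X$ at $(-1,0)$, and $J^0_2X$ at $(-2,0)$, sitting at total degrees $2$, $1$, $-1$, $-2$ respectively --- admit no nonzero $d_r$ in or out for any $r \ge 2$: a short case-check against the inequalities $-2 \le p \le 0$ and $0 \le q \le 2$ shows that every potential source or target of such a differential lands outside the block. These four entries therefore pass unchanged to $E_\infty$, and since each lives at a total degree different from $0$, the convergence to $X[0]$ forces them all to vanish, yielding (b) and (c).

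The only real subtlety is the bookkeeping of the bidegree shifts of the $d_r$ against the boundary of the $3\times 3$ block; once this is set up correctly, the conclusion is immediate, and no further homological input is required beyond Lemmas \ref{prevlemma} and \ref{complexlemma} (the latter being, in effect, the Cartan--Eilenberg resolution from which this spectral sequence can equivalently be extracted).
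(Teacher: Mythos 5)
Your proof is correct, but it takes a genuinely different route from the paper's. For part (a) the two arguments coincide: apply Lemma \ref{prevlemma} to the $B$-module $F^iX$. For parts (b) and (c), the paper stays inside the derived category: it breaks the double complex of Lemma \ref{complexlemma} into its rows, forms the triangles $Z \lra X \lra Y \lra Z[1]$ (bottom row versus the rest) and $W \lra Y \lra V \lra W[1]$ (middle row versus top row), and chases the resulting long exact sequences in homology, using part (a) to kill the outer terms and the fact that $H_jX=0$ for $j\neq 0$ to transfer vanishing from $Y$ to $Z$. You instead invoke the hypercohomology spectral sequence of $G$ applied to the complex $FX$ (which is indeed the second spectral sequence of that same double complex, so calling it the Grothendieck spectral sequence of $G\circ F$ is a harmless abuse), note that the $E_2$-page is confined to a $3\times 3$ block, and check that the four corner entries at nonzero total degree receive and emit no differentials; your bookkeeping of the bidegrees $(0,2)$, $(-1,2)$, $(-1,0)$, $(-2,0)$ against $d_r\colon E_r^{p,q}\to E_r^{p+r,q-r+1}$ is accurate, so these entries survive to $E_\infty$ and must vanish by convergence to $X$ concentrated in degree $0$. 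The authors explicitly acknowledge this alternative at the end of Section 1 ("several of the proofs can be done quite efficiently using spectral sequences, we have however chosen more conceptual arguments"). Your version is shorter and scales better to higher homological dimension, where the corner-vanishing argument persists; the paper's version is more elementary and self-contained, requiring only octahedral-free manipulation of triangles, and sets up the objects $Y,Z,V,W$ that are reused verbatim in the proof of Lemma \ref{seqlemma}.
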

\begin{proof}
Part $a)$ follows since $G_i=0$ for $i\geq 3$, by Lemma \ref{prevlemma}.

Let $i=0$ and $j>0$.  Let $Y$ be the total complex of the two top rows of
the double complex in Lemma \ref{complexlemma},
let $Z$ be the bottom row, let $W$ be the middle row,
and let $V$ be the top row. We have a triangle
$$Z \lra X \lra Y \lra Z[1]$$ and therefore $H_{j+1}Y\cong H_jZ$ since
$H_jX=0$ for $j\neq 0$. Moreover, there is the triangle $$W \lra Y \lra V \lra
W[1]$$ and therefore an exact sequence $$H_{j+1}W\longrightarrow
H_{j+1}Y \longrightarrow H_{j+1}V.$$ By Part $a)$ we have
$H_{j+1}W=J^j_{j+2}X=0=J^0_{j+3}X=H_{j+1}V$ and therefore
$H_{j+1}Y \cong H_j Z=J^0_jX=0$. This proves Part $b)$.

Part $c)$ is similar and is left to the reader.
\end{proof}

In the cases that the horizontal homology does not vanish, we get
the following edge effect.

\begin{lemma} \label{seqlemma}
We have a complex $$0 \lra J^1_2X \lra J^0_0X \lra X \lra J^2_2X
\lra J^1_0X \lra 0,$$ with homology at $X$ equal to $J^1_1X$, and
vanishing homology elsewhere.
\end{lemma}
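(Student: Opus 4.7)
The plan is to extract the claimed five-term complex from the double complex of Lemma~\ref{complexlemma} by applying the long exact cohomology sequence to the two distinguished triangles already introduced in the preceding lemma, and then splicing the results together.

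First, I would retain the notation $Z$, $W$, $V$ for the bottom, middle and top rows and $Y$ for the total complex of the top two rows, so that we have the two distinguished triangles $Z\lra X\lra Y\lra Z[1]$ and $W\lra Y\lra V\lra W[1]$. The next step is to use the vanishing lemma to identify the cohomology of each row explicitly. Since $T^i_j$ sits in total degree $i-j$, row $i$ has cohomology $J^i_{i-n}X$ in degree $n$. Therefore $Z\simeq J^0_0X$ and $V\simeq J^2_2X$ (each concentrated in degree zero), while $W$ has nonzero cohomology exactly in degrees $-1$, $0$ and $1$, equal to $J^1_2X$, $J^1_1X$ and $J^1_0X$ respectively.

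I would then apply the long exact cohomology sequence to $W\lra Y\lra V$ to obtain $H^{-1}(Y)\cong J^1_2X$, $H^1(Y)=0$, and an exact four-term sequence
$$0\lra J^1_1X\lra H^0(Y)\lra J^2_2X\lra J^1_0X\lra 0.$$
The long exact sequence of $Z\lra X\lra Y$, together with the fact that $Z$ and $X$ are concentrated in degree zero, then yields
$$0\lra H^{-1}(Y)\lra J^0_0X\lra X\lra H^0(Y)\lra 0.$$
Substituting $H^{-1}(Y)=J^1_2X$ and extending on the right by the composition $X\twoheadrightarrow H^0(Y)\lra J^2_2X\lra J^1_0X$ produces the required complex.

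Finally, I would verify by diagram chase that consecutive arrows compose to zero and that the sequence is exact at every term except $X$; at $X$, the kernel of the map to $J^2_2X$ is the preimage of $J^1_1X\subseteq H^0(Y)$, which modulo the image of $J^0_0X\lra X$ is exactly $J^1_1X$. The only real obstacle is bookkeeping---correctly pairing each cohomology group with the right position in the two long exact sequences and then carefully splicing them.
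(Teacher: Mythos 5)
Your proposal is correct and takes essentially the same route as the paper: the same two triangles $Z \lra X \lra Y \lra Z[1]$ and $W \lra Y \lra V \lra W[1]$, the same identification of the row cohomologies with the $J^i_jX$, and the same splicing of the two long exact sequences. One minor misattribution: the vanishing $H^1(Y)=0$ does not follow from the triangle $W \lra Y \lra V$ (which only exhibits $H^1(Y)$ as a quotient of $J^1_0X$, so it is needed to get exactness at $J^1_0X$); it comes from the triangle $Z \lra X \lra Y$ together with $H^1(X)=0=H^2(Z)$, which you do have available.
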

\begin{proof}
Let $Y,Z,V$ and $W$ be as in the proof of the previous lemma.
By computing long exact sequence for the triangle $$Z \lra X \lra Y \lra Z[1]$$
we get an exact sequence $$0 \lra H_1Y \lra H_0Z \lra X \lra H_0Y \lra 0,$$
and also $H_{-1}Y=0$.
By computing long exact sequence for the triangle $$W \lra Y \lra V \lra W[1]$$
we get an exact sequence $$0 \lra H_0W \lra H_0Y \lra  H_0V \lra H_{-1}W \lra 0,$$
and an isomorphism $H_1 W \cong H_1 Y$.
By splicing these sequences together we get the complex $$0 \lra H_1Y \lra H_0Z
\lra X \lra H_0V \lra H_{-1}W \lra 0$$ with homology at $X$ equal to $H_0W$
and vanishing homology elsewhere. The lemma follows since $H_1Y=J^1_2X$,
$H_0Z=J^0_0X$, $H_0W=J^1_1X$, $H_0V=J^2_2X$ and $H_{-1}W=J^1_0X$.
\end{proof}

We note the following consequence.

\begin{lemma} \label{nextlemma} Let $X\in \catA$.
\begin{itemize}
\item[a)] If $X\in Ker F^0$ then there is an exact sequence $0 \lra J^1_1X \lra X
\lra J^2_2X \lra J^1_0X \lra 0.$
\item[b)] Any $X\in  Ker F^1$ decomposes uniquely as $X=J^0_0X\oplus
J^2_2X$.
\item[c)] If $X\in Ker F^2$ then  there is an exact sequence $0 \lra J^1_2X \lra
J^0_0X \lra X \lra J^1_1X \lra 0$.
\end{itemize}
\end{lemma}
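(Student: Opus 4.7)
All three parts follow from the five-term complex of Lemma \ref{seqlemma}
$$0\lra J^1_2X \lra J^0_0X \lra X \lra J^2_2X \lra J^1_0X \lra 0$$
(homology $J^1_1X$ at $X$, vanishing elsewhere) by specializing to the appropriate kernel. Parts (a) and (c) are formal collapses of this complex, while part (b) requires an additional splitting argument.

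For part (a), $X\in Ker F^0$ gives $J^0_0X=G_0F^0X=0$ directly. The vanishing of homology at the first slot forces $J^1_2X\hookrightarrow J^0_0X=0$, so $J^1_2X=0$. The complex collapses to $0\to X\to J^2_2X\to J^1_0X\to 0$, and reading off the remaining homology, which is $J^1_1X$ sitting at $X$ as $\ker(X\to J^2_2X)$, produces the desired four-term exact sequence. Part (c) is the mirror argument: $X\in Ker F^2$ gives $J^2_2X=0$; vanishing homology at the last slot forces $J^1_0X=0$; and the homology $J^1_1X$ at $X$ now appears as $\mathrm{Coker}(J^0_0X\to X)$, producing the stated sequence.

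For part (b), the hypothesis $X\in Ker F^1$ kills all three $J^1_*X$ terms, so Lemma \ref{seqlemma} collapses to a short exact sequence $0\to J^0_0X\to X\to J^2_2X\to 0$. The main obstacle is upgrading this to a direct sum decomposition; I do not see how to deduce it by an abstract $Ext$-vanishing argument, since $Ext^1_\catA(J^2_2X, J^0_0X)\cong Ext^3_B(F^2X, F^0X)$ need not vanish in general. Instead, the plan is to revisit the double complex of Lemma \ref{complexlemma}: because $F^1X=0$, the middle row $T^1_\bullet$ (a projective resolution of $F^1X$, lifted via $G_0$) can be chosen to be zero, and the only vertical differentials in the double complex pass through this row and therefore vanish. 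The total complex is then the termwise direct sum of the totals of the top and bottom rows, which are quasi-isomorphic to $J^2_2X$ and $J^0_0X$ respectively. Since all three objects lie in $\catA$ this yields an isomorphism $X\cong J^0_0X\oplus J^2_2X$ in $\catA$. Uniqueness is then routine: for any decomposition $X=U\oplus V$ with $U\in\F 0$ and $V\in\F 2$, applying $F^0$ and $F^2$ and then $G_0$, $G_2$ recovers $U\cong J^0_0X$ and $V\cong J^2_2X$.
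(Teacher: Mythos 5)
Your proposal is correct and follows essentially the same route as the paper: parts (a) and (c) are obtained by specializing the five-term complex of Lemma \ref{seqlemma} (the paper is terser, simply noting $J^0_0X=0$ and citing that lemma), and part (b) is proved exactly as in the paper, by observing that the middle row of the double complex of Lemma \ref{complexlemma} vanishes so that the total complex splits as the direct sum of the top and bottom rows. Your added caution that an abstract $\ext$-vanishing argument is unavailable for the splitting in (b), and your explicit uniqueness argument, are sensible refinements but do not change the method.
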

\begin{proof}
If $X\in Ker F^0$ then $J^0_0X=0$ and the exact sequence in Part $a)$ is a special
case of the exact sequence in Lemma \ref{seqlemma}. Part $c)$ is similar.
If $X\in Ker F^1$, then the middle row of the double complex in Lemma \ref{complexlemma}
vanishes, and the total complex of the double complex decomposes into
the direct sum of the top and bottom row. So by taking homology, we see that
$X=J^0_0X\oplus J^2_2X$.
\end{proof}

We point out that the analysis we have done in this section is considerably more
complicated for higher homological dimension.  Also, several of the proofs can be done
quite efficiently using spectral sequences, we have however chosen more conceptual
arguments using elementary properties of derived categories.
For a systematic study of tilting using double complexes and spectral
sequences, see Keller-Vossieck \cite{Keller} and Butler \cite{Butler}.

\section{Proof of Theorem \ref{Theo2}}

We start by computing the subcategories $\F i$ and $\G i$ for $i\neq 1$.

\begin{lemma} \label{image}
For $i\neq 1$, the image of the functor $F^i:\catA \lra modB$ is dense in $\G i$.
Similarly, the image of the functor $G_i:\catB \lra \catA$ is dense in $\F i$.
\end{lemma}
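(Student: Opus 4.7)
The plan is to derive both density statements directly from the fact that $F$ and $G$ are mutually inverse derived equivalences, combined with the defining property of $\G i$ and $\F i$ as subcategories on which $G$, respectively $F$, sends objects to complexes concentrated in a single cohomological degree.

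First I would prove density of $F^i$ in $\G i$. Given $M \in \G i$, the vanishing $G_j M = 0$ for $j \neq i$ means the complex $GM \in \derA$ has nontrivial cohomology only in one degree. Hence $GM$ is quasi-isomorphic to $Y[i]$, where $Y := G_iM$ is naturally an object of $\catA$. Applying $F$ and using $FG \cong \mathrm{id}_{\derB}$ gives $M \cong FGM \cong FY[i]$, so $FY \cong M[-i]$ in $\derB$. Taking the $i$-th cohomology of both sides yields $F^i Y \cong M$, so $M$ lies in the essential image of $F^i : \catA \to modB$. As a byproduct one sees $Y \in \F i$, since $F^j Y = H^j(M[-i]) = 0$ for $j \neq i$.

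The density of $G_i$ in $\F i$ is completely symmetric. Given $Y \in \F i$, the complex $FY$ is concentrated in cohomological degree $i$, so $FY \cong M[-i]$ for $M := F^i Y \in modB$. Applying $G$ and using $GF \cong \mathrm{id}_{\derA}$ gives $Y \cong GFY \cong GM[-i]$, and taking homology in degree $i$ produces $G_i M \cong Y$, as required.

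I do not expect a serious obstacle: the argument is essentially a repackaging of the restricted equivalences $F|_{\F i} : \F i \to \G i[-i]$ mentioned in the introduction. The only care required is bookkeeping with the two indexing conventions, cohomological $F^i = H^i F$ versus homological $G_i = H_i G$, so that the shifts cancel and one obtains $F^i Y \cong M$ and $G_i M \cong Y$ on the nose rather than up to a spurious shift. The hypothesis $i \neq 1$ plays no role in the argument; it merely reflects the cases needed later, namely the two nontrivial subcategories $\F 0$ and $\F 2$ used in the constructions of $\K 0$ and $\K 2$ preceding Theorem~\ref{Theo2}.
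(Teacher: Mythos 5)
Your argument correctly establishes one half of the lemma: every $M\in\G i$ is isomorphic to $F^iY$ for $Y=G_iM\in\F i$, and symmetrically every $Y\in\F i$ is isomorphic to $G_iM$ for $M=F^iY\in\G i$. But the lemma, as the paper both proves and uses it, also asserts the containments $\mathrm{Im}\,F^i\subseteq \G i$ and $\mathrm{Im}\,G_i\subseteq \F i$ for $i\neq 1$ --- that is, $F^iX\in\G i$ for an \emph{arbitrary} $X\in\catA$ and $G_iM\in\F i$ for an \emph{arbitrary} $B$-module $M$, not merely for inputs already known to lie in $\F i$ or $\G i$. This is the direction actually invoked later: for instance, Lemma \ref{induction} needs $J^0_0X=G_0F^0X\in\F 0$ and $J^1_2X=G_2F^1X\in\F 2$, where $F^0X$ and $F^1X$ are arbitrary $B$-modules with no a priori membership in any $\G j$. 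That containment is not a formal consequence of the derived equivalence; it rests on the vanishing $J^i_jX=0$ for $i=0$, $j=1,2$ and for $i=2$, $j=0,1$ (and $j>2$), established via the double complex of Lemma \ref{complexlemma}. The paper's proof cites exactly this to conclude $GF^0X\cong G_0F^0X$, hence $F^0X\in\G 0$, and dually $G_0M\in\F 0$.

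Your closing remark that the hypothesis $i\neq 1$ plays no role is the symptom of the gap: essential surjectivity onto $\G i$ does hold for all $i$ by your argument, but the containment fails for $i=1$ (in general $J^1_0X$, $J^1_1X$ and $J^1_2X$ can all be nonzero, so $F^1X\notin\G 1$), which is precisely why the lemma is restricted to $i\neq 1$. To complete the proof you must add the containment argument, e.g.\ by quoting the vanishing of $J^i_j$ from Section 1.
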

\begin{proof}
We prove the lemma for $F^0$. The proofs for the
other functors are similar, and are left to the reader.

Since the homology in degree $1$ and $2$ in the total complex of the
double complex in Lemma \ref{double} is zero, we see that
$GF^0X\cong G_0F^0X$ for any $X\in \catA$, and so $GF^0X$ can
only have homology in degree $0$. Therefore $F^0X\in \G 0$.
Similarly, we have
$G_0M\in \F 0$ for a $B$-module $M$. Let $M\in \G 0$. Then
$M\cong FGM\cong FG_0M \cong F^0G_0M$, which shows that the
image of $F^0$ is dense in $\G 0$.
\end{proof}

The following lemma is an easy application of long exact sequence
in homology.

\begin{lemma} \label{zerolemma}
If $X\in \K 0$ is obtained as a cokernel of an injection $$0 \lra J_2 \lra
J_0 \lra X \lra 0,$$ $J_2\in \F 2, J_0\in \F 0$, then $F^0X\cong F^0J_0$,
$F^1X\cong F^2J_2$ and $F^2X=0$. Similarly, if $Y\in \K 2$ is obtained as a kernel
of a surjection $$0\lra Y \lra L_2 \lra L_0\lra 0,$$ $L_2\in \F 2, L_0\in \F 0$,
then $F^0Y=0$,  $F^1Y\cong F^0L_0$ and $F^2Y\cong F^2L_2$.
\end{lemma}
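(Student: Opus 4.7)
\medskip

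The plan is direct: both statements are straightforward consequences of the long exact cohomology sequence for $F = \mathbb{R}\hom(T,-)$, using the vanishing built into the definitions of $\F 0$ and $\F 2$.

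For the first statement, I would apply the cohomological functor $F^\bullet = \ext^\bullet_\catA(T,-)$ to the short exact sequence $0 \to J_2 \to J_0 \to X \to 0$. Since $J_2 \in \F 2$ we have $F^0 J_2 = F^1 J_2 = 0$ (and $F^i J_2 = 0$ for $i \geq 3$ by the homological dimension hypothesis), while $J_0 \in \F 0$ gives $F^1 J_0 = F^2 J_0 = 0$. Plugging these zeros into the nine-term long exact sequence $0 \to F^0 J_2 \to F^0 J_0 \to F^0 X \to F^1 J_2 \to F^1 J_0 \to F^1 X \to F^2 J_2 \to F^2 J_0 \to F^2 X \to 0$ collapses it immediately to the isomorphisms $F^0 X \cong F^0 J_0$ and $F^1 X \cong F^2 J_2$, together with $F^2 X = 0$.

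For the second statement, I would argue identically with the sequence $0 \to Y \to L_2 \to L_0 \to 0$. The hypotheses $L_2 \in \F 2$ and $L_0 \in \F 0$ again kill six of the nine terms in the long exact sequence, and reading off what remains gives $F^0 Y = 0$, $F^1 Y \cong F^0 L_0$, and $F^2 Y \cong F^2 L_2$.

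There is no genuine obstacle here; the only thing to check is that the homological dimension bound $F^i = 0$ for $i \geq 3$ is what guarantees the long exact sequence terminates where claimed, so that the two $\ext^3$ terms do not interfere with the identification $F^2 X = 0$ and $F^2 Y \cong F^2 L_2$. Everything else is just bookkeeping in the long exact sequence.
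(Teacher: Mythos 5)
Your proposal is correct and is exactly the argument the paper intends: the paper gives no written proof, remarking only that the lemma is ``an easy application of long exact sequence in homology,'' and your computation spells out precisely that long exact sequence with the vanishing from $\F 0$, $\F 2$, and the homological dimension bound. Nothing is missing.
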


We are ready to prove the main inductive step in the construction
of the filtration.

\begin{lemma} \label{induction}
Let $X\in \catA$. We have a filtration $Z \subseteq Y \subseteq
X$ with $Z\in \K 0$, $X/Y\in \K 2$ and $Y/Z\cong J^1_1X$.
\end{lemma}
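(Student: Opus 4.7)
The plan is to read off the filtration almost directly from the five-term complex of Lemma \ref{seqlemma}. First I would observe that each of the four outer terms of that complex lies in the expected $\F j$: the terms $J^0_0 X$ and $J^1_0 X$ lie in $\F 0$, while $J^1_2 X$ and $J^2_2 X$ lie in $\F 2$. This uses the fact, established in the course of proving Lemma \ref{image} though not literally stated in the lemma itself, that $G_j$ sends the whole of $\catB$ into $\F j$ (for $j\in\{0,2\}$), applied to the $B$-modules $F^iX$. With this in hand, each ``half'' of the five-term complex is poised to yield precisely the kind of short exact sequence that defines $\K 0$ or $\K 2$.

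Next I would set $Z = \im(J^0_0 X \ra X)$ and $Y = \ker(X \ra J^2_2 X)$, using the two middle maps of Lemma \ref{seqlemma}. Because Lemma \ref{seqlemma} gives a complex, the composition $J^0_0 X \ra X \ra J^2_2 X$ vanishes, so $Z \subseteq Y$ as subobjects of $X$. Splicing the left portion of the complex, using that it has no homology at $J^1_2 X$ or $J^0_0 X$, produces
$$0 \lra J^1_2 X \lra J^0_0 X \lra Z \lra 0,$$
exhibiting $Z$ as the cokernel of a monomorphism from $\F 2$ into $\F 0$, so $Z \in \K 0$. Splicing the right portion, using vanishing homology at $J^2_2 X$ and $J^1_0 X$, yields
$$0 \lra X/Y \lra J^2_2 X \lra J^1_0 X \lra 0,$$
exhibiting $X/Y$ as the kernel of an epimorphism from $\F 2$ onto $\F 0$, so $X/Y \in \K 2$. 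The quotient $Y/Z$ is then by construction the homology at $X$ of the five-term complex, which is $J^1_1 X$ by Lemma \ref{seqlemma}.

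The proof is essentially mechanical once Lemma \ref{seqlemma} is in hand. The one subtlety to watch is confirming that each outer $J^i_j X$ really belongs to $\F j$; this is not the content of Lemma \ref{image} as stated, but is exactly the intermediate assertion used in its proof and costs nothing extra to invoke. I expect no serious obstacle beyond this bookkeeping and the standard abelian-category manipulations needed to identify $\im$, $\ker$ and the spliced short exact sequences.
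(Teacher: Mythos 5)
Your proposal is correct and follows essentially the same route as the paper: the same choices $Z=\im(J^0_0X\to X)$ and $Y=\ker(X\to J^2_2X)$, with membership in $\K 0$ and $\K 2$ read off from Lemma \ref{seqlemma} and the fact (from Lemma \ref{image}) that $J^i_jX\in\F j$ for $j=0,2$. Your extra care about the spliced short exact sequences and about where exactly the containment $G_jM\in\F j$ is established only makes explicit what the paper's one-line proof leaves implicit.
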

\begin{proof}
Let $Z$ be the image of the map $J^0_0X \lra X$, and $Y$ the kernel
of the map $X \lra J^2_2X$ in the complex in Lemma \ref{seqlemma}.
Then from  Lemma \ref{seqlemma} we see that $Y/Z\cong J^1_1X$.
Furthermore, $Z\in \K 0$, $X/Y\in \K 2$ by Lemma \ref{image}.
\end{proof}

For $X\in \catA$ let $d(X)$ be the dimension of $F^1X$. We have
$d(X)=d(J^1_1X)$ if $X\in \F 1$.

\begin{lemma} \label{desclemma}
\label{smaller} Let $X\in\catA$. Then $d(J^1_1X)\leq d(X)$. Moreover, if
$d(J^1_1X)=d(X)$ then $J^1_1X\in \F 1$.
\end{lemma}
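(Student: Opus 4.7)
The inequality follows directly from Lemma \ref{induction}. That lemma supplies a filtration $Z \subseteq Y \subseteq X$ with $Z \in \K 0 \subseteq \ker F^2$, $X/Y \in \K 2 \subseteq \ker F^0$ and $Y/Z \cong J^1_1 X$. The long exact sequence of $\ext^*_\catA(T,-)$ applied to $0 \to Y \to X \to X/Y \to 0$ gives an injection $F^1 Y \hookrightarrow F^1 X$ (since $F^0(X/Y)=0$), while the one for $0 \to Z \to Y \to J^1_1 X \to 0$ gives a surjection $F^1 Y \twoheadrightarrow F^1(J^1_1 X)$ (since $F^2 Z = 0$). Hence $d(J^1_1 X) \leq \dim F^1 Y \leq d(X)$.

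For the moreover, suppose equality holds. Both maps above are then isomorphisms, so $F^1(J^1_1 X) \cong F^1 X$ as $B$-modules; applying $G_1$ gives $J^1_1 L \cong L$, where $L := J^1_1 X$. The goal is to show $F^0 L = 0 = F^2 L$. As preparation, note that by the dual of Lemma \ref{image}, $J^1_0 X = G_0 F^1 X \in \F 0$ and $J^1_2 X = G_2 F^1 X \in \F 2$, so $F^0(J^1_0 X) \in \G 0$ and $F^2(J^1_2 X) \in \G 2$.

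To obtain $F^0 L = 0$, I would combine the long exact sequences of $0 \to J^1_2 X \to J^0_0 X \to Z \to 0$, $0 \to Y \to X \to X/Y \to 0$ and $0 \to Z \to Y \to L \to 0$, using Lemma \ref{zerolemma} and the fact that the natural map $F^0(J^0_0 X) \to F^0 X$ (induced by the canonical map $J^0_0 X \to X$ appearing in Lemma \ref{seqlemma}) is the identity on $F^0 X$. This identifies $F^0 Z \to F^0 Y$ as an isomorphism. Under the equality assumption, $F^1 Z \to F^1 Y$ is zero, so the long exact sequence for $0 \to Z \to Y \to L \to 0$ collapses to yield $F^0 L \cong F^1 Z \cong F^2(J^1_2 X)$ (the latter iso by Lemma \ref{zerolemma}). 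Since $F^0 L \in \G 0$ by Lemma \ref{image} and $F^2(J^1_2 X) \in \G 2$, we conclude $F^0 L \in \G 0 \cap \G 2$; any $M$ in this intersection has $G_j M = 0$ for every $j$, so $GM \simeq 0$ in $\derA$ and $M = 0$ by the derived equivalence. Hence $F^0 L = 0$. The dual chase, using the natural map $X \to J^2_2 X$ and the sequences $0 \to X/Y \to J^2_2 X \to J^1_0 X \to 0$, $0 \to Y \to X \to X/Y \to 0$, $0 \to Z \to Y \to L \to 0$, yields $F^2 L \cong F^0(J^1_0 X) \in \G 0$, forcing $F^2 L \in \G 0 \cap \G 2 = 0$. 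Therefore $L \in \F 1$.

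The main obstacle is the simultaneous chase through four long exact sequences, and in particular verifying that the two natural maps $F^0(J^0_0 X) \to F^0 X$ and $F^2 X \to F^2(J^2_2 X)$ coming from the complex of Lemma \ref{seqlemma} are identities under the canonical identifications $F^0(J^0_0 X) \cong F^0 X$ and $F^2(J^2_2 X) \cong F^2 X$. This naturality must be traced back to the construction of these maps out of the double complex of Lemma \ref{complexlemma} and the counit of the derived equivalence.
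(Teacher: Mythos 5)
Your argument is correct and is essentially the paper's own proof. The inequality is obtained from exactly the same two long exact sequences attached to the filtration $Z\subseteq Y\subseteq X$ of Lemma \ref{induction}, and in the equality case the decisive step in both proofs is identical: $F^1Z\cong F^0(X/Z)$ lies in $\G 0$ while $F^1Z\cong F^2J^1_2X$ lies in $\G 2$, so $F^1Z\in\G 0\cap\G 2=0$. The only cosmetic difference is the endpoint: the paper deduces $J^1_2X=0$ and (dually) $J^1_0X=0$, so that $F^1X\in\G 1$ and hence $J^1_1X=G_1F^1X\in\F 1$, whereas you compute $F^0L$ and $F^2L$ directly; these are the same computations read in the other order. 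Finally, the ``main obstacle'' you flag is not actually an obstacle: you do not need the natural maps $F^0(J^0_0X)\to F^0X$ and $F^2X\to F^2(J^2_2X)$ to be identities, only a dimension count. Left exactness gives inclusions $F^0Z\hookrightarrow F^0Y\hookrightarrow F^0X$, and $\dim F^0Z=\dim F^0(J^0_0X)=\dim F^0X$ by Lemma \ref{zerolemma} together with $F^0G_0M\cong M$ for $M\in\G 0$, so all three spaces coincide; this is precisely how the paper argues (``by the essential surjectivity of $F^0$''), and the dual count shows the surjection $F^2X\to F^2(J^2_2X)$ is an isomorphism.
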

\begin{proof}
Let $X\not\in \F 1$ with $d(X)>0$. Let $Z \subseteq Y \subseteq X$ be the
canonical filtration of Lemma \ref{induction}. There are long exact
sequences in homology $$F^1Z\lra F^1Y \lra F^1(Y/Z) \lra 0$$ and $$0
\lra F^1Y \lra F^1X \lra F^1(X/Y)$$ which shows that $d(J^1_1X)=
d(Y/Z)\leq d(Y) \leq d(X)$.

Now assume that $d(J^1_1X)= d(X)$. Then from the above long exact
sequences we see that $F^1Z\lra F^1Y$ is the zero map. Since
the inclusion $Z\subseteq X$ factors as $Z\subseteq Y \subseteq X$,
we also have that $F^1Z\lra F^1X$ is zero. So there is the long exact
sequence $$0\lra F^0Z \lra F^0X \lra F^0(X/Z) \lra F^1Z \lra 0$$
Now $F^0Z\cong F^0J^0_0X$ by Lemma \ref{zerolemma} and
$F^0J^0_0X\cong F^0X$ by the essential surjectivity of $F^0$. Hence
$F^1Z\cong F^0(X/Y)\in \G 0$. But we also have
$F^1Z\cong F^2J^1_2X\in \G 2$, by Lemma \ref{zerolemma}. But then
$F^1Z\in  \G 0 \cap \G 2 = 0$, and so $F^1Z=0$, and therefore
$F^2J^1_2X=0$. Hence $J^1_2X=0$ since $J^1_2X\in \F 2$.
Similarly, $J^1_0X=0$. Therefore $GF^1X$ has homology $J^1_1X$
concentrated in degree $1$, and so $J^1_1X \in \F 1$.
\end{proof}

We can now prove the second theorem stated in the introduction.

\begin{theorem} \label{Theorem1} Let $T\in \catA$ be a tilting object
with projective dimension two. For any $X\in \catA$ there is a
filtration $(0) = Z_0 \subseteq ... \subseteq Z_n \subseteq Y_n
\subseteq ... \subseteq Y_0 = X$ with $Y_i/Y_{i+1}\in \K 2$, $Z_{i+1}/Z_i
\in \K 0$ and $Y_n/Z_n\in \K 1$.
\end{theorem}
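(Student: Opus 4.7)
The plan is induction on the non-negative integer $d(X) = \dim_\field F^1 X$, with a single application of Lemma \ref{induction} providing each inductive step and Lemma \ref{desclemma} guaranteeing termination. Recall that Lemma \ref{induction} gives, for every $X$, a chain $Z \subseteq Y \subseteq X$ with $Z \in \K 0$, $X/Y \in \K 2$, and $Y/Z \cong J^1_1 X$; Lemma \ref{desclemma} says that either $J^1_1 X$ already lies in $\F 1 = \K 1$, or $d(J^1_1 X) < d(X)$.

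For the base case $d(X) = 0$, the vanishing $F^1 X = 0$ forces $J^1_1 X = G_1 F^1 X = 0$, so in the output of Lemma \ref{induction} the middle factor is zero and $Y = Z$. Taking $n = 1$ with $Z_1 = Y_1 = Z$ gives the filtration $0 \subseteq Z \subseteq Z \subseteq X$, in which $Y_1/Z_1 = 0 \in \K 1$ is the vacuous middle factor.

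For the inductive step ($d(X) > 0$), apply Lemma \ref{induction} to $X$. If $J^1_1 X \in \F 1$, stop with $n = 1$, $Z_1 = Z$, $Y_1 = Y$. Otherwise Lemma \ref{desclemma} gives $d(Y/Z) < d(X)$, so by the inductive hypothesis there is a filtration
$$0 = W_0 \subseteq W_1 \subseteq \cdots \subseteq W_m \subseteq V_m \subseteq \cdots \subseteq V_0 = Y/Z$$
with $V_j/V_{j+1} \in \K 2$, $W_{j+1}/W_j \in \K 0$, and $V_m/W_m \in \K 1$. Pulling back through the quotient $Y \lra Y/Z$ lifts this to a chain $Z = W'_0 \subseteq \cdots \subseteq W'_m \subseteq V'_m \subseteq \cdots \subseteq V'_0 = Y$ whose consecutive quotients agree with those of the $W_j$ and $V_j$ layers. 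Prepending the $\K 0$-layer $0 \subseteq Z$ and appending the $\K 2$-layer $Y \subseteq X$ assembles the desired filtration of $X$ with $n = m + 1$.

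The main (and only real) obstacle is the bookkeeping in the pullback step: one must check that preimages under the epimorphism $Y \lra Y/Z$ preserve the consecutive quotients, and that the outer $\K 0$- and $\K 2$-layers splice onto the inductively produced inner filtration without index collisions. Neither issue is deep, but writing out the reindexing carefully is what turns the conceptual iteration into the precise statement of the theorem.
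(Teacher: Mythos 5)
Your proof is correct and follows essentially the same route as the paper: apply Lemma \ref{induction} repeatedly to the middle factor $Y/Z \cong J^1_1X$ and use Lemma \ref{desclemma} to guarantee termination, the only difference being that you package the iteration as an induction on $d(X)$ and spell out the preimage-lifting step that the paper leaves implicit.
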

\begin{proof}
Let $Z_1=Z$ and $Y_1=Y$ be given by Lemma \ref{induction}. Given
$Z_i$ and $Y_i$, with $Y_i/Z_i\not \in \F 1$ we construct $Z_{i+1}$
and $Y_{i+1}$ by applying Lemma \ref{induction} to $Y_i/Z_i$, and obtain
the filtration $$(0) = Z_0 \subseteq ... \subseteq Z_{i+1} \subseteq Y_{i+1}
\subseteq ... \subseteq Y_0 = X$$ with $Y_i/Y_{i+1}\in \K 2$, $Z_{i+1}/Z_i
\in \K 0$, and $Y_{i+1}/Z_{i+1}=J^1_1Y_i/Z_i$. By Lemma \ref{smaller}, this
procedure must eventually stop.
\end{proof}

As a consequence of the construction we see that $Y_t/Z_t\cong
(J^1_1)^tX$ and that for any $X$, there exists a smallest integer
$t>0$ such that $(J^1_1)^tX=(J^1_1)^{t+1}X$. It would be
interesting to find a method to compute this number in general.

We end this section by showing the existence of the filtration in Theorem \ref{Theo1}.
A different construction, which also shows uniqueness and functoriality, will be given in
the next section.

\begin{corollary} \label{cor}
There is a filtration $0=X_0 \subseteq
X_1 \subseteq X_2 \subseteq X_3 = X$ with $X_{i+1}/X_{i}\in \E i$
for $i=0,1,2$.
\end{corollary}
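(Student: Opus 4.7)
The plan is to extract the required three-step filtration directly from the refined filtration produced in Theorem \ref{Theorem1}, which we can use as a black box. Given $X\in\catA$, Theorem \ref{Theorem1} supplies a chain
$$0 = Z_0 \subseteq Z_1 \subseteq \cdots \subseteq Z_n \subseteq Y_n \subseteq Y_{n-1} \subseteq \cdots \subseteq Y_0 = X$$
with $Z_{i+1}/Z_i\in \K 0$, $Y_i/Y_{i+1}\in \K 2$, and $Y_n/Z_n\in \K 1$. I would set $X_0=0$, $X_1=Z_n$, $X_2=Y_n$, $X_3=X$, and then check that each subquotient $X_{i+1}/X_i$ lies in $\E i$.

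For $X_1/X_0=Z_n$, the chain $0=Z_0\subseteq Z_1\subseteq\cdots\subseteq Z_n$ has consecutive subquotients in $\K 0\subseteq \E 0$. Since $\E 0$ is by definition the extension closure of $\K 0$, an obvious induction on the length of the filtration (each step is a short exact sequence $0\to Z_i\to Z_{i+1}\to Z_{i+1}/Z_i\to 0$, with both end terms in $\E 0$ by induction hypothesis and by construction) gives $Z_n\in \E 0$. The middle factor $X_2/X_1=Y_n/Z_n$ already lies in $\K 1=\E 1$ by construction. Finally, $X_3/X_2=X/Y_n$ carries the induced chain $0=Y_n/Y_n\subseteq Y_{n-1}/Y_n\subseteq\cdots\subseteq Y_0/Y_n=X/Y_n$ whose successive quotients are isomorphic to $Y_i/Y_{i+1}\in\K 2\subseteq\E 2$, so the same induction argument using extension closure of $\E 2$ yields $X/Y_n\in\E 2$.

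There is no real obstacle here; the corollary is a purely formal consequence of the refined filtration together with the definition of $\E i$ as the extension closure of $\K i$. Uniqueness and functoriality are explicitly \emph{not} claimed at this stage (they are deferred to the next section), so the proof plan only needs to produce some such filtration, and the above reorganization of the $Z$'s and $Y$'s accomplishes exactly that in a couple of lines.
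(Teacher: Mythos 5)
Your proposal is correct and is essentially identical to the paper's own proof: the paper likewise sets $X_1=Z_n$, $X_2=Y_n$, $X_3=X$ and observes that the outer subquotients lie in the extension closures of $\K 0$ and $\K 2$ respectively, while the middle one is in $\K 1=\E 1$. You merely make explicit the routine induction on filtration length that the paper leaves implicit.
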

\begin{proof}
With the notation of the previous theorem, let $X_1 = Z_n, X_2 = Y_n$
and $X_3=X$. Then $X_3/X_2$ is in the extension closure of $\K 2$,
$X_2/X_1\in \K 1 = \E 1$, and $X_1/X_0$ is in the extension closure of
$\K 0$. The proof follows.
\end{proof}

We will give a different construction of this filtration in the next
section.

\section{Proof of Theorem \ref{Theo1}}

We will now give an alternative description of the filtration from Corollary
\ref{cor}, which will show that the filtration is functorial, and so Theorem
\ref{Theo1} follows.

We show that $\K 0=Fac T$. That is, $\E 0$ consists of objects having
filtrations with subfactors from $Fac T$.

\begin{lemma} \label{FacT}
$\K 0=Fac T$
\end{lemma}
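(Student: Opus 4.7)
The plan is to prove the two inclusions $\K 0\subseteq \mathrm{Fac}\,T$ and $\mathrm{Fac}\,T\subseteq \K 0$ separately, by transporting presentations back and forth across the derived equivalence.

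For $\K 0\subseteq \mathrm{Fac}\,T$, I would first reduce to showing $\F 0\subseteq \mathrm{Fac}\,T$: any $X\in\K 0$ is by definition a quotient of some $J_0\in\F 0$, so any surjection $T^n\twoheadrightarrow J_0$ composes with $J_0\twoheadrightarrow X$ to give one onto $X$. Fix then $J_0\in\F 0$. The vanishing results of Section 1 combined with $F^1J_0=F^2J_0=0$ force $J^0_0J_0\cong J_0$ and $J^0_jJ_0=0$ for $j\geq 1$; equivalently, $GF^0J_0$ is concentrated in degree zero and equals $J_0$. I would then pick a $B$-linear surjection $B^n\twoheadrightarrow F^0J_0$ with kernel $M$, apply $G$, and use $G_iB=0$ for $i\neq 0$ (since $GB\cong T$) together with $G_1F^0J_0=0$ to collapse the long exact sequence of $G_*$'s into
$$0\to G_0M\to T^n\to J_0\to 0,$$
exhibiting $J_0$ as a quotient of $T^n$.

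For the reverse inclusion, given $N\in \mathrm{Fac}\,T$ and a short exact sequence $0\to K\to T^n\to N\to 0$, I would apply $F$ and use $\mathrm{Ext}^i(T,T)=0$ for $i>0$ to extract from the long exact sequence both $F^2N=0$ (so $N\in\ker F^2$) and a connecting isomorphism $F^1N\cong F^2K$. Applying $G_1$ gives
$$J^1_1N=G_1F^1N\cong G_1F^2K=J^2_1K=0$$
by the vanishing results of Section 1. The four-term sequence of Lemma \ref{nextlemma}(c) therefore collapses to $0\to J^1_2N\to J^0_0N\to N\to 0$, and since $J^0_0N\in\F 0$ and $J^1_2N\in\F 2$ by Lemma \ref{image}, this is a presentation witnessing $N\in\K 0$.

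The main obstacle is the first inclusion: the definition of $\F 0$ is purely cohomological, whereas $\mathrm{Fac}\,T$ demands an actual surjection from a power of $T$. The trick is that a free $B$-presentation of $F^0J_0$, pulled back through $G$, automatically produces such a surjection once one knows $GF^0J_0$ lives in degree zero, so the real work is in verifying that vanishing.
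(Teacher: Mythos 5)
Your proof is correct, but the second inclusion is argued quite differently from the paper. For $\K 0\subseteq \mathrm{Fac}\,T$ your route is essentially the paper's: the paper also reduces to showing that the object $T'\in\F 0$ appearing in a $\K 0$-presentation lies in $\mathrm{Fac}\,T$, and does so by taking an $\add T$-approximation $T^n\to T'$ and using right exactness of $G_0$; your free $B$-presentation $B^n\twoheadrightarrow F^0J_0$ is the same device seen from the $B$-side, with the extra (harmless, and implicitly used throughout Section 1, e.g.\ in Lemma \ref{seqlemma}) input that $G_{-1}M=0$ for a module $M$, which is what gives surjectivity of $T^n\to J_0$. For $\mathrm{Fac}\,T\subseteq\K 0$ the paper instead takes a surjective $\add T$-approximation $0\to Y\to T^n\to X\to 0$ with $Y\in \ker F^1$, splits $Y=Y_0\oplus Y_2$ using Lemma \ref{nextlemma}(b), and pushes out along $Y\to Y_2$ to manufacture a presentation $0\to Y_2\to T'\to X\to 0$ with $T'\in\F 0$. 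You avoid the approximation and the pushout entirely: from an arbitrary surjection $T^n\to N$ you extract $F^2N=0$ and $F^1N\cong F^2K$, hence $J^1_1N\cong J^2_1K=0$, and then Lemma \ref{nextlemma}(c) hands you the canonical presentation $0\to J^1_2N\to J^0_0N\to N\to 0$. This is arguably cleaner and gives slightly more, namely that the witnessing $\K 0$-presentation of an object of $\mathrm{Fac}\,T$ can always be taken to be the canonical one coming from Lemma \ref{seqlemma}; the paper's pushout argument, on the other hand, stays inside $\catA$ and does not need the vanishing of $J^2_1$.
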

\begin{proof}
Assume that $X\in Fac T$. Then there is an approximation $$0 \lra
Y \lra T^n \lra X \lra 0$$ with $Y\in Ker F^1$. By Lemma
\ref{nextlemma} and Lemma \ref{image}, $Y$ decomposes as
$Y=Y_0\oplus Y_2$ where $Y_0\in \F 0$ and $Y_2\in \F 2$. The pushout
of the projection $Y\lra Y_2$
$$\xymatrix{ &0 \ar[d] & 0 \ar[d] && \\ & Y_ 0 \ar[d] \ar@{=}[r]& Y_0 \ar[d]&&
\\ 0 \ar[r] & Y\ar[r] \ar[d] & T^n \ar[d] \ar[r] & X \ar@{=}[d] \ar[r] & 0 \\
0 \ar[r] & Y_2 \ar[d] \ar[r] & T'\ar[d] \ar[r] & X\ar[r] & 0 \\ &0&0&&}$$
gives us a short exact sequence $$0 \lra Y_2 \lra T' \lra X \lra 0$$
with $T'\in \F 0$ which shows that $X\in \K 0$.

Conversely, assume that $X\in \K 0$. Then there is an epimorphism
$T'\lra X$ with $T'\in \F 0$.  The proof is complete if we can show that
$T'\in Fac T$. Let $T^n\lra T'$ be an approximation. Then
$Hom_\catA(T,T^n)\lra Hom_\catA(T,T')$  is surjective, and so
$$T^n\cong G_0Hom_\catA(T,T^n) \lra G_0Hom_\catA(T,T') \cong T'$$
is surjective, since $G_0$ is right exact, and the lemma follows.
\end{proof}

\begin{lemma}
$\E 0$ is closed under factors.
\end{lemma}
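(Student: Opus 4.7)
The plan is to prove the statement by induction on the length of an $\E 0$-filtration of an object. Recall from Lemma \ref{FacT} that $\K 0=FacT$, so $\E 0$ is by definition the extension closure of $FacT$, and every object of $\E 0$ admits a finite filtration whose subfactors lie in $FacT$.

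First I would handle the base case: $FacT$ itself is closed under quotients. Indeed, if $X\in FacT$ then there is an epimorphism $T^n\to X$; composing with any further epimorphism $X\to Y$ produces an epimorphism $T^n\to Y$, so $Y\in FacT\subseteq \E 0$.

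For the inductive step, suppose $X\in \E 0$ has an $\E 0$-filtration of length $n\geq 2$, and write this as a short exact sequence
$$0 \lra A \lra X \lra B \lra 0$$
with $A\in \E 0$ admitting a shorter filtration and $B\in FacT$. Let $q\colon X\epi Y$ be any epimorphism. In the abelian category $\catA$, forming the image of $A$ in $Y$ yields a commutative diagram with exact rows
$$\xymatrix{0 \ar[r] & A \ar[r] \ar@{->>}[d] & X \ar[r] \ar@{->>}[d]^q & B \ar[r] \ar@{->>}[d] & 0 \\ 0 \ar[r] & A' \ar[r] & Y \ar[r] & B' \ar[r] & 0}$$
where $A'$ is a quotient of $A$ and $B'$ is a quotient of $B$. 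By the induction hypothesis $A'\in \E 0$, and by the base case $B'\in FacT\subseteq \E 0$. Since $\E 0$ is closed under extensions, the bottom row gives $Y\in \E 0$.

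There is no genuine obstacle here: the argument is a formal induction using only that $FacT$ is closed under epimorphic images and that $\E 0$ is extension-closed by construction. The only point requiring a moment of care is the diagram construction above, which is a standard fact in any abelian category (the pushout/image construction producing the induced short exact sequence of quotients).
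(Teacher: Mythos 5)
Your proof is correct and follows essentially the same route as the paper: the paper's own argument is the one-line observation that any quotient of an object filtered in $FacT$ is again filtered in $FacT$, and your induction with the image diagram is simply the careful expansion of that observation. Nothing further is needed.
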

\begin{proof}
Any quotient of an object filtered in $Fac T$ is also filtered in $Fac T$, and so
$\E 0$ is closed under factors.
\end{proof}

We consider the subcategories $Ker F^i$. First note that $Ker F^i$ is
closed under extensions, $Ker F^0$ is closed under subobjects and
$Ker F^2$ is closed under factors. For a subcategory $\mathcal{C}\subseteq
\catA$ and $X\in \catA$, let $\tau_{\mathcal{C}}X$ denote the trace of
$\mathcal{C}$ in $X$.

\begin{lemma}  \label{keylemma} The following hold.
\begin{itemize}
\item[a)] For $X\in Ker F^0$ there is a canonical exact sequence
$0 \lra X'' \lra X \lra X' \lra 0$ with $X''=\tau_{\E 1}X\in \E 1$ and $X'\in \E 2$.
\item[b)] For $X\in Ker F^2$ there is a canonical  exact sequence
$0 \lra X'' \lra X \lra X' \lra 0$ with $X''=\tau_{\E 0}X\in \E 0$ and $X'\in \E 1$.
\end{itemize}
\end{lemma}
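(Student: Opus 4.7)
The plan is to read both parts directly off the four-term exact sequences provided by Lemma \ref{nextlemma}, and then use Theorem \ref{extra} to identify the distinguished subobject with the stated trace.

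For part (a), suppose $X\in \ker F^0$. Lemma \ref{nextlemma}(a) gives the exact sequence
$$0 \lra J^1_1 X \lra X \lra J^2_2 X \lra J^1_0 X \lra 0.$$
I set $X'':=J^1_1 X$, which lies in $\F 1=\E 1$ by definition, and $X':=X/X''$. Breaking the sequence at $X$ exhibits $X'$ as the kernel of the surjection $J^2_2 X \lra J^1_0 X$; by Lemma \ref{image} (and the parallel statement for $G_2$ indicated in its proof), $J^2_2 X=G_2 F^2 X\in \F 2$ and $J^1_0 X=G_0 F^1 X\in \F 0$, so $X'\in \K 2\subseteq \E 2$. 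To identify $X''$ with $\tau_{\E 1}X$, note that any $f:E\to X$ with $E\in \E 1$ composed with $X\lra X'$ is a map in $\hom(\E 1,\E 2)$, which vanishes by Theorem \ref{extra}; hence $f$ factors through $X''\hookrightarrow X$. The reverse inclusion $X''\subseteq \tau_{\E 1}X$ is immediate since $X''\in \E 1$ is itself a subobject of $X$.

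Part (b) is entirely dual. For $X\in \ker F^2$, Lemma \ref{nextlemma}(c) provides
$$0 \lra J^1_2 X \lra J^0_0 X \lra X \lra J^1_1 X \lra 0.$$
I set $X'':=\im(J^0_0 X\to X)$, so that $X/X''\cong J^1_1 X\in \F 1=\E 1$; simultaneously $X''$ is the cokernel of the injection $J^1_2 X\hookrightarrow J^0_0 X$. Here $J^1_2 X\in \F 2$ and $J^0_0 X\in \F 0$ by (the analogs of) Lemma \ref{image}, so $X''\in \K 0\subseteq \E 0$. The identification $X''=\tau_{\E 0}X$ follows symmetrically from $\hom(\E 0,\E 1)=0$ given by Theorem \ref{extra}.

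Canonicity of both sequences is automatic from the functoriality of $J^i_j=G_jF^i$ together with the functoriality of kernels, images and cokernels. The only point requiring care is the membership of the various $J^i_j X$ in the correct categories $\F{i}$; this relies on the cases of Lemma \ref{image} that were stated as ``similar'' and left to the reader, so in the formal write-up I would make those analogs explicit before invoking them. No further obstacle is anticipated, since the structural work has already been carried out in Section 1.
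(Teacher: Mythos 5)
There is a genuine gap, and it sits at the heart of your argument: the claim that $X''=J^1_1X$ ``lies in $\F 1=\E 1$ by definition.'' This is not true by definition and is false in general. We have $J^1_1X=G_1F^1X$, and Lemma \ref{image} deliberately excludes $i=1$: the functors $G_0$ and $G_2$ land in $\F 0$ and $\F 2$ respectively, but $G_1$ of an arbitrary $B$-module need \emph{not} lie in $\F 1$ (equivalently, $F^1X$ need not lie in $\G 1$). This is precisely why the paper does not argue as you do. Instead, for part a) it takes $X''=\tau_{\E 1}X$ from the outset, shows the quotient $Z$ satisfies $\tau_{\E 1}Z=0$ and $Z\in\ker F^0$, and then \emph{iterates} the construction $Y_n=J^1_1Y_{n-1}$, using the dimension-descent Lemma \ref{desclemma} to show the iteration terminates in an object of $\E 1$ which must be zero; only then does an induction produce a $\K 2$-filtration of $Z$. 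The fact that the filtration of Theorem \ref{Theo2} can require $n>1$ steps (the paper remarks that the stabilization index $t$ of $(J^1_1)^tX$ is in general unknown) is exactly the statement that one application of $J^1_1$ does not suffice. What your argument does correctly establish is the single-step sequence $0\lra J^1_1X\lra X\lra X'\lra 0$ with $X'\in\K 2$ (this is Lemma \ref{induction} restricted to $\ker F^0$, where $J^0_0X=0$); the missing content is why the kernel term can be replaced by something in $\E 1$, and that is the hard part of the lemma. The same defect breaks part b): there $X/X''\cong J^1_1X$ need not lie in $\F 1$, and $\mathrm{im}(J^0_0X\to X)$ is the trace of $T$, which can be strictly smaller than $\tau_{\E 0}X$ (Section \ref{ext} gives an example with $FacT\subsetneq\E 0$).

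A secondary problem is circularity. You invoke Theorem \ref{extra} to get $\hom(\E 1,\E 2)=0$ when identifying $X''$ with $\tau_{\E 1}X$, but in the paper that vanishing is deduced from Lemma \ref{cate2}, whose proof rests on part a) of the very lemma you are proving. This particular step is repairable --- one can show $\hom(\F 1,\F 2)=0$ directly by computing $\hom_{\derB}(F^1E[-1],F^2Y[-2])=\ext^{-1}_B(F^1E,F^2Y)=0$ and then pass to subobjects and extension closures --- but as written the dependency runs the wrong way. The main gap, however, is the unjustified membership $J^1_1X\in\F 1$, and fixing it requires the iterative descent argument rather than a one-line appeal to the four-term sequences of Lemma \ref{nextlemma}.
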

\begin{proof}
Let $X\in Ker F^0$. Then $\tau_{\E 1}X\in Ker F^0$, since $X\in Ker F^0$,
and $\tau_{\E 1}X \in Ker F^2$ since there is a surjection $E\lra \tau_{\E 1}X$
with $E\in \E 1\subseteq Ker F^2$. This shows that $\tau_{\E 1}X\in \E 1$.
There is a short exact sequence $$0 \lra \tau_{\E 1}X \lra X \lra Z \lra 0,$$
where $Z=X/\tau_{\E 1}X$. By taking pullback $$\xymatrix{0 \ar[r] &
\tau_{\E 1}X \ar[r] \ar@{=}[d] &  E' \ar[d] \ar[r] & E \ar[d]\ar[r] & 0 \\ 0 \ar[r]
& \tau_{\E 1}X \ar[r] &  X \ar[r] & Z \ar[r] &  0}$$ along any nonzero map
$E\lra Z$ from $\E 1$ to $Z$ and using that $\E 1$ is closed under extensions
we see that $\tau_{\E 1}Z=0$. We now show that $Z\in Ker F^0$. There is an
exact sequence $$0 \lra F^0Z \lra F^1\tau_{\E 1}X \lra Z' \lra 0,$$ where $Z'$
is the cokernel of the inclusion $F^0Z \lra F^1\tau_{\E 1}X$. Using the functor $G$ we get a
long exact sequence with zero terms except for $$0 \lra \tau_{\E 1}X \lra  G_1Z'
\lra J^0_0Z\lra 0.$$ In particular, $G_iZ'=0$ for $i\neq 1$ and so $G_1Z'\in \E 1$.
If $J^0_0Z$ is nonzero, then we have a nonzero map $G_1Z'\rightarrow J^0_0Z
\rightarrow Z$ contradicting that $\tau_{\E 1}Z=0$. Hence $J^0_0Z=0$ and so
$F^0Z=0$ since $F^0Z\in \G 0$. This proves that $Z\in Ker F^0$.

Let $Y_0=Z$ and let $Y_n=J^1_1Y_{n-1}$ for all $n>0$. By Lemma \ref{nextlemma}
there is an exact sequence $$0 \lra J^1_1Y_n \lra Y_{n} \lra J^2_2Y_{n}
\lra J^1_0Y_{n} \lra 0,$$ and so by Lemma \ref{desclemma} there exists a $t>0$
such that $Y_t=Y_{t+1}\in \E 1$. But then $Y_t=0$ since $Y_t\subseteq Z$
and $\tau_{\E 1}Z=0$. Then $Y_{t-1}\in \K 2$, since $J^2_2Y_{t-1}\in \F 2$
and $J^1_0Y_{t-1}\in \F 0$, by Lemma \ref{image}. Then by induction on $t$
we see that $Z=Y_0$ has a filtration with factors in $\K 2$, and therefore
$Z\in \E 2$. This completes the proof of Part a).

Now let $X\in Ker F^2$. Since $\E 0$ is closed under factors we
see that $\tau_{\E 0}X\in \E 0$. There is a short exact sequence
$$0 \lra \tau_{\E 0}X \lra X \lra Z \lra 0,$$ where $Z$ is the cokernel of
the inclusion $\tau_{\E 0}X \lra X $. Then $Z\in Ker F^2$, since $X\in
Ker F^2$ and $Ker F^2$ is closed under factors. By taking  pullback
$$\xymatrix{0 \ar[r] & \tau_{\E 0}X \ar[r] \ar@{=}[d] & T'\ar[d] \ar[r] &
T\ar[r] \ar[d] & 0 \\ 0 \ar[r] & \tau_{\E 0}X \ar[r] & X \ar[r] & Z \ar[r] & 0 }$$
along a non-zero map $T\lra Z$ we get a map from $\E 0$ to $X$ which
does not factor through the inclusion $\tau_{\E 0}X \lra X$, which is a
contradiction, and so $Z\in Ker F^0$. Therefore $Z\in \E 1=Ker F^0\cap Ker F^2$.
This completes the proof of Part b).
\end{proof}

For $X\in \catA$, let $X_0=0$, $X_1=\tau_{\E 0}X$ and let $X_2\subseteq X$ be the preimage
in $X$ of $\tau_{\E 1} (X/X_1)$ for the quotient map $X\lra X/X_1$, and $X_3=X$.

\begin{theorem} \label{th}
There is a functorial filtration $0=X_0 \subseteq X_1 \subseteq X_2
\subseteq X_3 = X$ with $X_{i+1}/X_{i}\in \E i$ for $i=0,1,2$.
\end{theorem}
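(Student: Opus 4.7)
The plan is to verify that the canonical filtration defined just before the theorem statement satisfies the stated properties, and then read off functoriality from the canonical nature of the trace construction. First I would establish $X_1 = \tau_{\E 0}X \in \E 0$. Since $X$ is noetherian, $\tau_{\E 0}X$ is a sum of finitely many subobjects of $X$ lying in $\E 0$; a sum of two such subobjects $A,B \in \E 0$ fits in a short exact sequence $0 \to A \to A+B \to B/(A\cap B) \to 0$ with both ends in $\E 0$ (using the preceding lemma's closure of $\E 0$ under factors), so induction on the number of summands, combined with closure under extensions, gives $\tau_{\E 0}X \in \E 0$.

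The crucial intermediate claim is that $Y := X/X_1 \in Ker F^0$. Suppose for contradiction that $F^0 Y \neq 0$; then a nonzero map $T \to Y$ has image $I$ lying in $FacT = \K 0 \subseteq \E 0$ by Lemma \ref{FacT}. The preimage $P \subseteq X$ of $I$ under the quotient $X \to Y$ fits into a short exact sequence $0 \to X_1 \to P \to I \to 0$, so $P \in \E 0$ by closure of $\E 0$ under extensions. Maximality of $X_1 = \tau_{\E 0}X$ then forces $P \subseteq X_1$, hence $I=0$, a contradiction. With $Y \in Ker F^0$ established, Lemma \ref{keylemma}(a) applied to $Y$ yields a canonical exact sequence $0 \to \tau_{\E 1}Y \to Y \to Y' \to 0$ with $\tau_{\E 1}Y \in \E 1$ and $Y' \in \E 2$. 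Taking $X_2 \subseteq X$ to be the preimage of $\tau_{\E 1}Y$ under $X \to Y$ then gives $X_2/X_1 \cong \tau_{\E 1}Y \in \E 1$ and $X/X_2 \cong Y' \in \E 2$.

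Functoriality is then immediate: a morphism $f : X \to \bar X$ sends any map $C \to X$ from $C \in \E 0$ to a map $C \to \bar X$ whose image lies in $\tau_{\E 0}\bar X$, so $f(X_1) \subseteq \bar X_1$; the induced map $Y \to \bar Y$ likewise sends $\tau_{\E 1}Y$ into $\tau_{\E 1}\bar Y$, yielding $f(X_2) \subseteq \bar X_2$. Uniqueness follows from Theorem \ref{extra}: for any filtration of $X$ with $X_{i+1}/X_i \in \E i$, applying $Hom(\E 0, -)$ to $0 \to X_2/X_1 \to X/X_1 \to X/X_2 \to 0$ gives $Hom(\E 0, X/X_1) = 0$, forcing $X_1 = \tau_{\E 0}X$; analogously $X_2/X_1 = \tau_{\E 1}(X/X_1)$. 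The main obstacle is the pullback/maximality argument in the second paragraph, which is what allows Lemma \ref{keylemma}(a) to be invoked and thereby identifies the final quotient $X/X_2$ with a canonical object of $\E 2$.
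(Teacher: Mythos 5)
Your proposal is correct and follows essentially the same route as the paper: define $X_1$ as the trace of $\E 0$, show $X/X_1\in \ker F^0$ by a pullback/preimage maximality argument, and then invoke Lemma \ref{keylemma}(a) to produce $X_2$, with functoriality coming from functoriality of the trace. Your added details (the finite-sum argument for the trace lying in $\E 0$, and the uniqueness via Theorem \ref{extra}) correspond to points the paper treats tersely or in a separate subsequent lemma, and they check out.
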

\begin{proof}
First, $\E 0$ is closed under factors, and so $X_1\in \E 0$. Also, $X/X_1
\in KerF^0$ since by taking pullback, any nonzero map $T\lra Z$
induces a map $\E 0$ to $X$ which does not factor through the inclusion
$\tau_{\E 0}X \lra X$. The existence of the filtration then follows from Part a)
of Lemma \ref{keylemma}. We conclude the proof by showing functoriality.
Given $f:X\lra Y$, since trace is a functor, there are induced maps $f:X_1 \lra Y_1$,
and therefore maps $X/X_1 \lra Y/Y_1$, by the functoriality of trace again,
there are maps $X_2/X_1\lra Y_2/Y_1$, since these are induced by $f$,
we must have $f(X_2)\subseteq Y_2$. Hence, the filtration is functorial.
\end{proof}

This theorem completes proof of the existence of the filtration in Theorem \ref{Theo1}.
The uniqueness will be shown at the end of this section.

\begin{lemma}
$KerHom(\E 0,-)=KerF^0$
\end{lemma}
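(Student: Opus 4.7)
The plan is to establish the two inclusions separately, using the identification $\K 0 = Fac T$ from Lemma \ref{FacT} and the definition of $\E 0$ as the extension closure of $\K 0$.

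For the inclusion $\ker \hom(\E 0,-) \subseteq \ker F^0$, I would simply observe that $T \in \F 0 \subseteq \K 0 \subseteq \E 0$, so if $X$ kills all maps from $\E 0$, then in particular $F^0X = \hom(T,X) = 0$.

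For the reverse inclusion $\ker F^0 \subseteq \ker \hom(\E 0,-)$, I would proceed in two steps. First, I would show $\hom(\K 0, X) = 0$ for any $X \in \ker F^0$. By Lemma \ref{FacT}, any $E \in \K 0$ admits an epimorphism $p: T^n \twoheadrightarrow E$. For any map $f: E \lra X$, the composite $f \circ p: T^n \lra X$ is a sum of maps $T \lra X$, each of which is zero since $F^0 X = 0$. Since $p$ is an epimorphism and $f\circ p = 0$, we conclude $f = 0$.

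Second, I would extend this vanishing from $\K 0$ to its extension closure $\E 0$ by induction on the length of a filtration by $\K 0$-subquotients. Given a short exact sequence $0 \lra E' \lra E \lra E'' \lra 0$ with $\hom(E', X) = \hom(E'', X) = 0$, the left-exactness of $\hom(-, X)$ yields an exact sequence $0 \lra \hom(E'', X) \lra \hom(E, X) \lra \hom(E', X)$, forcing $\hom(E, X) = 0$. Applied inductively along any $\K 0$-filtration of an object in $\E 0$, this completes the proof.

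There is no substantial obstacle; the argument is a routine application of left-exactness once one notices that $T$ itself lies in $\E 0$ (making one direction immediate) and that every object of $\K 0 = Fac T$ is a quotient of a direct sum of copies of $T$ (controlling the other direction).
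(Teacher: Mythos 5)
Your proof is correct, and it takes a mildly but genuinely different route to the nontrivial inclusion $\ker F^0\subseteq \ker\hom(\E 0,-)$. The paper argues by contraposition: given a nonzero map $E\lra Z$ with $E\in\E 0$, it replaces $E$ by its image (using the preceding lemma that $\E 0$ is closed under factors), so that $E\lra Z$ is an inclusion of a nonzero object of $\E 0$, and then extracts a nonzero subobject of $E$ lying in $Fac\,T$ (the bottom layer of a $\K 0$-filtration) to produce a nonzero map $T\lra Z$. You instead argue directly: first $\hom(\K 0,X)=0$ via the presentation $T^n\epi E$ coming from $\K 0=Fac\,T$, and then you propagate the vanishing up the extension closure by induction on the length of a $\K 0$-filtration, using left-exactness of $\hom(-,X)$. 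Both arguments rest on the same two pillars, namely Lemma \ref{FacT} and the fact that objects of $Fac\,T$ are quotients of $T^n$; the paper's version is shorter because it can quote the factor-closure of $\E 0$, while yours is more self-contained and avoids that lemma entirely, at the cost of spelling out the induction. The one point worth being explicit about in your write-up is the (standard, and used elsewhere in the paper) identification of the extension closure $\E 0$ with the class of objects admitting a finite filtration with subquotients in $\K 0$, since your induction is over such filtrations.
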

\begin{proof}
$KerHom(\E 0,-)\subseteq KerF^0$ since $T\in \E 0$ and $F^0=Hom(T,-)$.
Assume there is a nonzero map $E\lra Z$ for $E\in \E 0$. We may assume that
$E\rightarrow Z$ is an inclusion, since $\E 0$ is closed under factors. But $E$ has
a non-zero subobject from $FacT$, and therefore there is a nonzero map
$T\rightarrow E \rightarrow Z$. This proves the other inclusion.
\end{proof}

We describe the category $\E 2$ as follows.

\begin{lemma} \label{cate2}
$\E 2=KerF^0\bigcap KerHom(\E 1,-)$
\end{lemma}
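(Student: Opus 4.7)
The plan is to establish the two inclusions making up the claimed equality. For $\E 2 \subseteq \ker F^0 \cap \ker \hom(\E 1, -)$, I will first observe that every generator $Y \in \K 2$ satisfies $F^0 Y = 0$ by Lemma \ref{zerolemma}. Since $\ker F^0$ is closed under extensions (as noted just before Lemma \ref{keylemma}), this extends to the extension closure $\E 2$, giving $\E 2 \subseteq \ker F^0$. The second containment $\E 2 \subseteq \ker \hom(\E 1, -)$ is exactly the case $(i,j)=(1,2)$ of Theorem \ref{extra}.

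For the reverse inclusion, suppose $X \in \ker F^0 \cap \ker \hom(\E 1, -)$. Because $X \in \ker F^0$, Lemma \ref{keylemma}a) produces a canonical short exact sequence
$$0 \lra \tau_{\E 1} X \lra X \lra X' \lra 0$$
with $\tau_{\E 1} X \in \E 1$ and $X' \in \E 2$. The hypothesis that $\hom(E, X) = 0$ for every $E \in \E 1$ forces the trace $\tau_{\E 1} X$ to vanish, since this trace is by definition the sum of images of maps from objects of $\E 1$ into $X$. Hence $X \cong X'$ lies in $\E 2$, as required.

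The heavy lifting has already been carried out in Lemma \ref{keylemma}a), which supplied the decomposition of an arbitrary object of $\ker F^0$ into an $\E 1$-piece and an $\E 2$-piece; the present lemma is essentially the observation that killing the $\E 1$-piece characterises membership in $\E 2$, paralleling the description $\ker \hom(\E 0, -) = \ker F^0$ just proved. No new substantive obstacle arises; the only thing to be careful about is citing the right part of Lemma \ref{keylemma} (part a), applicable since $X \in \ker F^0$) rather than part b).
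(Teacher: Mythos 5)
Your inclusion $\E 2 \subseteq \ker F^0$ (via Lemma \ref{zerolemma} and extension-closure of $\ker F^0$) and your proof of the reverse inclusion $\ker F^0 \cap \ker\hom(\E 1,-) \subseteq \E 2$ via Lemma \ref{keylemma}a) both match the paper and are correct. The gap is your justification of $\E 2 \subseteq \ker\hom(\E 1,-)$ by citing the case $(i,j)=(1,2)$ of Theorem \ref{extra}. In this paper that theorem is only \emph{stated} in the introduction; its proof appears at the end of Section 3, and there the vanishing of $\hom(\E 1,\E 2)$ is deduced precisely from Lemma \ref{cate2}, the statement you are proving. Your argument is therefore circular.

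What is needed is a direct proof that $\hom(\E 1,\E 2)=0$. The paper's route: for $Y\in\F 2$ one has $Y\in\ker F^0$, so Lemma \ref{keylemma}a) yields a short exact sequence $0\to\tau_{\E 1}Y\to Y\to Y'\to 0$ with $\tau_{\E 1}Y\in\E 1$ and $Y'\in\E 2\subseteq\ker F^0$; the long exact sequence for $F$ then embeds $F^1\tau_{\E 1}Y$ into $F^1Y$, which is zero because $Y\in\F 2$, and since $\tau_{\E 1}Y\in\E 1=\F 1$ this forces $\tau_{\E 1}Y=0$, i.e.\ $\hom(\E 1,Y)=0$. Every object of $\K 2$ is a subobject of an object of $\F 2$, so $\hom(\E 1,\K 2)=0$, and an induction on the length of a $\K 2$-filtration (any map from $\E 1$ into $X\in\E 2$ must factor through the subobject $L$ in $0\to L\to X\to K\to 0$ with $K\in\K 2$) extends this to all of $\E 2$. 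You need to supply some such argument, or otherwise establish the $(1,2)$ case of Theorem \ref{extra} independently of the present lemma.
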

\begin{proof}
The inclusion $KerF^0\bigcap KerHom(\E 1,-)\subseteq \E 2$ follows from
part a) of Lemma \ref{keylemma}. Let $X\in \E 2$. Then $X$ has a
filtration with factors isomorphic to subobjects of objects in $\F 2\subseteq
Ker F^0$, and so $X\in KerF^0$. We show that $Hom(\E 1,\K 2)=0$.
Let $Y\in \F 2$. Then $Y\in Ker F^0$ and Lemma \ref{keylemma} a) gives
us an injection $F^1\tau_{\E 1} Y \lra F^1Y$, but then $F^1\tau_{\E 1}Y=0$
since $Y\in Ker F^1$. Thus $\tau_{\E 1}Y=0$ and therefore $Hom(\E 1,Y)=0$.
Any object in $\K 2$ is a subobject of an object in $\F 2$, therefore
$Hom(\E 1, \K 2)=0$.

Now let $X\in \E 2$. Then there is a short exact sequence
$$0 \lra L \lra X \lra K \lra 0$$ $K\in \K 2$ and $L \in \E 2$. Then
any map from $\E 1$ to $X$ must factor through $L$, and so
by induction on the length of a filtration of $X$ we are done.
\end{proof}

We end this section by summarizing some facts about the subcategories $\E i$.
We have already seen that $\E 0$ is closed under factors.

\begin{lemma} The following is true.
\begin{itemize}
\item[a)] $\E 2$ is closed under submodules.
\item[b)] $\E 1$ is closed under images of maps to
$Ker F^0$.
\end{itemize}
\end{lemma}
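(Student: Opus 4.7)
My plan is to treat the two parts independently. For (a) I would invoke the characterization $\E 2 = Ker F^0 \cap Ker\hom(\E 1,-)$ from Lemma \ref{cate2}, which reduces closure under submodules to two elementary closure properties. For (b) I would use the identification $\E 1 = \F 1$ and verify the defining vanishing conditions by standard long exact sequences.

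For part (a), given $X\in \E 2$ and a subobject $Y\subseteq X$, I need only check that $Y\in Ker F^0$ and $Y\in Ker\hom(\E 1,-)$. The first is automatic since $F^0=\hom(T,-)$ is left exact, so $Ker F^0$ is closed under subobjects. For the second, any nonzero map $E\to Y$ from some $E\in \E 1$ would compose with the monomorphism $Y\hookrightarrow X$ to give a nonzero map $E\to X$, contradicting $X\in Ker\hom(\E 1,-)$.

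For part (b), I would start with a map $f:X\to Z$ where $X\in \E 1=\F 1$ and $Z\in Ker F^0$, and set $I=\im f$. The task reduces to showing $F^0 I=0$ and $F^2 I=0$. The first drops out of the short exact sequence $0\to I\to Z\to Z/I\to 0$ by left exactness of $F^0$, since $F^0 Z=0$. The second comes from the long exact sequence attached to $0\to \ker f\to X\to I\to 0$: the relevant piece reads $F^2 X\to F^2 I\to F^3(\ker f)$, in which the left term vanishes because $X\in \F 1$ and the right vanishes because $T$ has homological dimension two. Hence $I\in \F 1=\E 1$.

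Neither part looks substantial; the main conceptual step is recognizing that Lemma \ref{cate2} converts part (a) from a filtration-chase into a one-line subobject argument, after which everything else is routine exactness.
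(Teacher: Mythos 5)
Your proposal is correct and follows essentially the same route as the paper: part (a) is exactly the reduction via Lemma \ref{cate2} to the subobject-closure of $Ker F^0$ and $Ker\hom(\E 1,-)$, and part (b) is the paper's argument that the image lies in $Ker F^0$ (as a subobject of $Z$) and in $Ker F^2$ (as a quotient of $X$, which you justify by the same long exact sequence the paper leaves implicit). No gaps.
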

\begin{proof}
a) follows from Lemma \ref{cate2}.

Let $E\lra Y$ with $E\in \E 1$ and $Y\in Ker F^0$, and let $Z$ be the
image of the map. Then $Z\in KerF^0$, since $KerF^0$ is closed under submodules.
Also $Z\in KerF^2$ since $\E 1\subseteq Ker F^2$, and $E\lra Z$ is surjective.
This proves b).
\end{proof}

We also note the following.

\begin{theorem} If $j>i$, then $Hom(\E i,\E j)=0$. In particular, the
subcategories $\E 0$, $\E 1$ and $\E 2$ are pairwise disjoint.
\end{theorem}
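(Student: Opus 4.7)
The plan is to observe that the final theorem is essentially a packaging of the characterizations established in the preceding lemmas, so the proof reduces to citing them in the correct order. There are only three pairs $(i,j)$ to consider, namely $(0,1)$, $(0,2)$ and $(1,2)$, and I would treat the two with $i=0$ together, and then handle $(1,2)$.

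For the cases $i=0$: the Lemma establishing $\ker\hom(\E 0,-)=\ker F^0$ already gives us the claim as soon as we know that $\E 1,\E 2\subseteq \ker F^0$. For $\E 1$ this is immediate because $\E 1=\K 1=\F 1$ and $\F 1$ is by definition contained in $\ker F^0$. For $\E 2$ it is exactly one half of Lemma \ref{cate2}, which asserts $\E 2=\ker F^0\cap \ker\hom(\E 1,-)$. Hence $\hom(\E 0,\E 1)=0$ and $\hom(\E 0,\E 2)=0$.

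For the case $(1,2)$: the other half of Lemma \ref{cate2} gives $\E 2\subseteq \ker\hom(\E 1,-)$ directly, so $\hom(\E 1,\E 2)=0$. The ``in particular'' statement about pairwise disjointness then follows formally: if $X\in \E i\cap \E j$ with $i<j$ and $X\neq 0$, then $1_X$ is a non-zero element of $\hom(\E i,\E j)$, contradicting what we have just shown.

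Since both ingredients, the identification $\ker\hom(\E 0,-)=\ker F^0$ and the characterization $\E 2=\ker F^0\cap \ker\hom(\E 1,-)$, have already been proved earlier in the section, there is no real obstacle. The only subtle point worth highlighting is that the argument relies on $\E 0$ being closed under factors (used to prove $\ker\hom(\E 0,-)=\ker F^0$) and on the inductive argument over the length of a $\K 2$-filtration in Lemma \ref{cate2}, both of which are in place. Thus the proof is a short bookkeeping exercise citing these lemmas.
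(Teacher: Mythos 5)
Your proposal is correct and follows essentially the same route as the paper: both deduce the $i=0$ cases from $\E 1,\E 2\subseteq \ker F^0=\ker\hom(\E 0,-)$ and the $(1,2)$ case from Lemma \ref{cate2}, with disjointness as a formal consequence. Your write-up merely spells out a few of the inclusions (such as $\E 1=\F 1\subseteq \ker F^0$) that the paper leaves implicit.
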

\begin{proof}
We have $\E 1, \E 2\subseteq Ker F^0=KerHom(\E 0,-)$. The other
case follows from Lemma \ref{cate2}. That the categories are
disjoint is then a trivial consequence.
\end{proof}

Using this theorem we may show the following uniqueness property
of the filtration in Theorem \ref{th}. In particular, this shows that
the filtrations in Corollary \ref{cor} and Theorem \ref{th} coincide,
and concludes the proof of Theorem \ref{Theo1}.

\begin{lemma}
If $X$ has a filtration $0=Y_0 \subseteq Y_1 \subseteq Y_2
\subseteq Y_3 = X$ with $Y_{i+1}/Y_{i}\in \E i$ for $i=0,1,2$, then
$Y_i=X_i$ for $i=0,1,2,3$.
\end{lemma}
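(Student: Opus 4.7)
The plan is to compare the two filtrations level by level, using the Hom-vanishing from the preceding theorem together with the characterization of $X_1$ and $X_2/X_1$ as traces. I would prove the two nontrivial equalities $Y_1=X_1$ and $Y_2=X_2$; the equalities $Y_0=X_0=0$ and $Y_3=X_3=X$ are immediate.

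First I would show $Y_1=X_1$. Since $Y_1\in \E 0$, the inclusion $Y_1\hookrightarrow X$ factors through the trace $\tau_{\E 0}X=X_1$, giving $Y_1\subseteq X_1$. For the reverse inclusion, I would apply $\hom(E,-)$ for $E\in \E 0$ to the short exact sequence $0 \lra Y_2/Y_1 \lra X/Y_1 \lra X/Y_2 \lra 0$. By the preceding theorem, $\hom(\E 0,\E 1)=0=\hom(\E 0,\E 2)$, so the outer terms vanish and hence $\hom(\E 0, X/Y_1)=0$. Since $X_1\in \E 0$ (as established in Theorem \ref{th}), the composition $X_1\hookrightarrow X \lra X/Y_1$ must be zero, giving $X_1\subseteq Y_1$.

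Next I would show $Y_2=X_2$. Quotienting by $X_1=Y_1$ gives a filtration $Y_2/X_1 \subseteq X/X_1$ with $Y_2/X_1\in \E 1$ and $(X/X_1)/(Y_2/X_1)\cong X/Y_2\in \E 2$. Since $X_2/X_1=\tau_{\E 1}(X/X_1)$, the inclusion $Y_2/X_1\hookrightarrow X/X_1$ factors through $X_2/X_1$, so $Y_2\subseteq X_2$. For the reverse, consider the composition $X_2/X_1 \lra X/X_1 \lra X/Y_2$ whose image is $X_2/Y_2$. Since $\E 2$ is closed under subobjects (by the lemma stated just before the theorem), $X_2/Y_2\in \E 2$. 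But $X_2/X_1\in \E 1$ and $\hom(\E 1,\E 2)=0$ by the preceding theorem, so the composition is zero, giving $X_2\subseteq Y_2$.

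The main obstacle is essentially bookkeeping: making sure the trace $X_1=\tau_{\E 0}X$ does belong to $\E 0$ (so that Hom-vanishing applies to it) and, symmetrically, that $X_2/Y_2$ genuinely lies in $\E 2$ (requiring the subobject-closure of $\E 2$). Both facts are already recorded in the excerpt, so the argument reduces to invoking the Hom-vanishing theorem twice in the two natural short exact sequences above.
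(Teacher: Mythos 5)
Your proof is correct and takes essentially the same approach as the paper: both establish $Y_1\subseteq X_1$ from the trace definition and then get the reverse inclusion from $\hom(\E 0,X/Y_1)=0$ via the Hom-vanishing theorem, and the paper handles $Y_2=X_2$ with a terse ``similarly'' where you spell out the same argument one level up. No gaps.
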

\begin{proof}
Clearly, $Y_0=X_0$ and $Y_3=X_3$. By the definition of $X_1$ as the
trace of $\E 0$ in $X$, we have $Y_1\subseteq X_1$. If the
inclusion is proper, then there is a non-zero map $E\lra X/Y_1$ from $\E 0$, but
this is impossible since $X/X_1$ is filtered by $\E 1$ and $\E 2$ and
there are no maps from $\E 0$ to $\E 1$ and $\E 2$ by the previous Lemma.
Thus $Y_1=X_1$. Similarly, $Y_2=X_2$. This finishes the proof.
\end{proof}

\section{The extension closure of $FacT$} \label{ext}

In this section let $\catA=modA$ for a finite dimensional algebra $A$.
For any $X\in \catA$ there is a natural map $$\phi_X:J^0_0X=T\otimes_B
Hom_\catA(T,X)\lra X$$ given by $t\otimes f\mapsto f(t)$, with image
equal to the trace $\tau_TX$ of $T$ in $X$. From Lemma \ref{seqlemma}
there is the short exact sequence $$0 \lra J^1_2X\lra J^0_0X
\stackrel{\phi}{\lra} X$$ with image $im\phi=Z_1$ equal to the first
term $Z_1$ in the filtration of Theorem \ref{Theo2}.

\begin{lemma}
$Z_1=\tau_TX$.
\end{lemma}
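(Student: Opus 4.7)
The plan is to show the two inclusions $Z_1 \subseteq \tau_T X$ and $\tau_T X \subseteq Z_1$ separately, using only the structure of the complex in Lemma \ref{seqlemma} together with the identification $\K 0 = \mathrm{Fac}\, T$ from Lemma \ref{FacT}. I would deliberately avoid having to match the edge-map $\phi$ of Lemma \ref{seqlemma} with the adjunction-counit evaluation $\phi_X$ at the element level.

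For $Z_1 \subseteq \tau_T X$: by Lemma \ref{induction} applied to $X$, the first piece of the filtration lies in $\K 0$, so $Z_1 \in \K 0$, and by Lemma \ref{FacT} we have $\K 0 = \mathrm{Fac}\, T$. Hence there is a surjection $T^n \twoheadrightarrow Z_1$; composing with the inclusion $Z_1 \hookrightarrow X$ produces a map $T^n \to X$ whose image is $Z_1$. Each component $T \to X$ has image inside $\tau_T X$ by the very definition of the trace, so $Z_1 \subseteq \tau_T X$.

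For the reverse inclusion it suffices to show $\hom_\catA(T, X/Z_1) = 0$, which forces every $f: T \to X$ to factor through $Z_1 \hookrightarrow X$. Setting $Y_1 := \ker(X \to J^2_2 X)$, the complex of Lemma \ref{seqlemma} (whose only nonzero homology is $J^1_1 X$, at the position of $X$) gives $Y_1/Z_1 \cong J^1_1 X$ and an embedding $X/Y_1 \hookrightarrow J^2_2 X$. This yields a short exact sequence
$$0 \lra J^1_1 X \lra X/Z_1 \lra X/Y_1 \lra 0.$$
Both $J^1_1 X \in \F 1$ and $J^2_2 X \in \F 2$ lie in $\ker F^0$, and $\ker F^0$ is closed under subobjects (by left exactness of $\hom_\catA(T,-)$) and under extensions. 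Hence $X/Y_1 \in \ker F^0$, and then $X/Z_1 \in \ker F^0$, giving $\hom_\catA(T, X/Z_1) = 0$.

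Combining the two inclusions yields $Z_1 = \tau_T X$. There is no significant obstacle: the whole argument reduces to unpacking the five-term complex of Lemma \ref{seqlemma} and using the pointwise vanishing of $F^0$ on $\F 1$ and $\F 2$; the only small point requiring care is correctly describing $X/Z_1$ as an extension of $X/Y_1 \subseteq J^2_2 X$ by $J^1_1 X$.
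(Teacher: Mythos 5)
Your first inclusion is sound and is essentially what the paper does: $Z_1\in\K 0=\mathrm{Fac}\,T$ by Lemmas \ref{induction} and \ref{FacT}, so a surjection $T^n\twoheadrightarrow Z_1$ composed with $Z_1\hookrightarrow X$ shows $Z_1\subseteq\tau_TX$. The reverse inclusion, however, has a genuine gap: you assert $J^1_1X\in\F 1$, but Lemma \ref{image} establishes $J^i_iX=G_iF^iX\in\F i$ only for $i\neq 1$, and $i=1$ is exactly the exceptional case. Lemma \ref{desclemma} shows $J^1_1X\in\F 1$ only under the extra hypothesis $d(J^1_1X)=d(X)$, and the whole point of the iterative construction in Theorem \ref{Theo2} is that in general $J^1_1X$ is not yet in $\F 1$ --- nor even in $\ker F^0$, which is what your argument actually needs.

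Worse, the statement you reduce to, $\hom_\catA(T,X/Z_1)=0$, is false in general: since the lemma asserts $Z_1=\tau_TX$, your claim would say that $\hom(T,X/\tau_TX)=0$ for every $X$, i.e.\ that $\mathrm{Fac}\,T$ and $\ker F^0$ form a torsion pair, which is precisely what breaks down when $T$ has projective dimension two. The paper's own example in Section \ref{ext} ($A=k\Delta/\mathrm{rad}(k\Delta)^2$ for $\Delta\colon 1\rightrightarrows 2\to 3$, $T=DA$, and $X$ the indecomposable extension of $S_1$ by $S_2$) has $\tau_TX=S_2$ and $\hom(T,X/\tau_TX)=\hom(T,S_1)\neq 0$; feeding this back into your short exact sequence $0\lra J^1_1X\lra X/Z_1\lra X/Y_1\lra 0$ (where indeed $F^0(X/Y_1)=0$) gives $F^0(J^1_1X)\cong F^0(X/Z_1)\neq 0$, confirming that $J^1_1X\notin\ker F^0$ here. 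The paper avoids this trap by arguing differently on the hard inclusion: it first shows $\phi_X$ is surjective whenever $X\in\mathrm{Fac}\,T$, by comparing the sequence of Lemma \ref{seqlemma} with a $\K 0$-presentation of $X$ via Lemma \ref{zerolemma} and counting dimensions, and then transports this to general $X$ through the naturality square for the inclusion $\tau_TX\hookrightarrow X$, concluding $\tau_TX\subseteq\mathrm{im}\,\phi_X=Z_1$.
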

\begin{proof}
If $X\in FacT$, then $X\in \K 0$ by Lemma \ref{FacT}, and so there is a
short exact sequence $$0 \lra X_2 \lra X_0 \lra X \lra 0$$ for $X_2\in \F 2$
and $X_0\in \F 0$. But then by Lemma \ref{zerolemma}, $X_0\cong J^0_0X$ and
$X_2\cong J^1_2X$, and so $\phi$ is surjective by comparing dimensions
and therefore $Z_1=X=\tau_TX$.

Now, for arbitrary $X$, let $i:\tau_TX\lra X$ be the inclusion map. There is a commutative
diagram $$\xymatrix{J^0_0\tau_TX \ar[r]^{J^0_0i} \ar[d]^{\phi_{\tau_TX}}
& J^0_0X \ar[d]^{\phi_X} \\ \tau_TX \ar[r]^i & X}.$$ Since $\phi_{\tau_TX}$ is
surjective, we have $\tau_TX\subseteq im\phi_X$. Then since $J^0_0X\in FacT$
and therefore $im \phi_X\in FacT$ we have $\tau_TX=im\phi_X=Z_1$.
\end{proof}

We give an example showing that $\K 0=FacT\subsetneq \E 0$. In particular,
this example shows that the filtration of Theorem \ref{Theo2} can be more refined than
the filtration of Theorem \ref{Theo1}. Consider the
following quiver. $$\Delta:\xymatrix{1 \ar@/^/[r] \ar@/_/[r] & 2 \ar[r] & 3}$$
Let $A=k\Delta/rad(k\Delta)^2$ and let $T=DA$. Then $FacT$ consists of direct
sums of direct summands of $T\oplus S_2$, where $S_2$ is the simple at vertex $2$.
The representation $$\xymatrix{k \ar@/^/[r]^1 \ar@/_/[r]_1 & k \ar[r] & 0}$$ is an
extension of $S_1\in Fac T$ by $S_2$,  but is clearly not in $FacT$. Hence
$FacT$ is not closed under extensions and so $FacT\subsetneq \E 0$.

However, we recall the following positive result. An indecomposable
injective $A$-module $I$ is called maximal if any surjective map
$J\lra I$ with $J$ injective is split.
The following proposition follows from the argument preceding
Proposition 6.9 of \cite{AS}.

\begin{proposition} \cite{AS} \label{ASLemma}
$FacDA$ is closed under extensions if and only if any maximal injective module
has projective dimension at most one.
\end{proposition}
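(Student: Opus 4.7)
The plan is to prove both directions of the equivalence separately. For the forward implication, I would argue by contrapositive: suppose $I$ is a maximal indecomposable injective with $\mathrm{pd}(I) \geq 2$. By a composition-length argument applied to $\ext^2(I,-)$, there is a simple module $S$ with $\ext^2(I,S) \neq 0$. Letting $J$ be the injective envelope of $S$, the long exact sequence associated to $0 \lra S \lra J \lra J/S \lra 0$, together with $\ext^i(I,J) = 0$ for $i \geq 1$, yields $\ext^1(I, J/S) \cong \ext^2(I,S) \neq 0$. Hence a nonzero element produces a non-split extension $0 \lra A \lra E \lra I \lra 0$ with $A = J/S \in FacDA$. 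To reach the desired contradiction, I claim $E \notin FacDA$: any surjection $J' \lra E$ from an injective $J'$ would compose to a surjection $J' \lra I$, which by maximality of $I$ admits a section $I \lra J'$; the composite $I \lra J' \lra E$ then splits the original sequence, contradicting the non-splitness.

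For the converse direction, assume every maximal indecomposable injective has $\mathrm{pd} \leq 1$ and consider a short exact sequence $0 \lra M_1 \lra M \lra M_2 \lra 0$ with $M_1, M_2 \in FacDA$. The key step is to choose a surjection $I_2 \lra M_2$ where $I_2$ is a direct sum of maximal indecomposable injectives. Starting from any injective cover and iteratively replacing a non-maximal indecomposable summand $I_k$ by an indecomposable injective $J_k$ admitting a non-split surjection $J_k \lra I_k$, the replacement strictly increases the dimension of that summand; since indecomposable injectives in $\catA$ have bounded dimension, the procedure terminates at a direct sum of maximal indecomposable injectives, and each replacement preserves surjectivity onto $M_2$. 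Pulling back the given sequence along $I_2 \lra M_2$ yields $0 \lra M_1 \lra E \lra I_2 \lra 0$. Since $\mathrm{pd}(I_2) \leq 1$, applying $\hom(I_2,-)$ to an injective presentation $0 \lra K \lra J \lra M_1 \lra 0$ of $M_1 \in FacDA$ gives $\ext^1(I_2, M_1) \cong \ext^2(I_2, K) = 0$. So the pullback sequence splits and $E \cong M_1 \oplus I_2 \in FacDA$; the surjection $E \lra M$ then places $M$ in $FacDA$.

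The main obstacle lies in the converse direction, specifically the step of covering $M_2$ by a sum of maximal indecomposable injectives: this relies essentially on the finite-dimensionality of $A$ to force termination of the replacement procedure. Once such a cover is produced, the remaining steps reduce to routine long-exact-sequence manipulations using the projective dimension bound and the injectivity of $J$.
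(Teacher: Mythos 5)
The paper does not actually prove this proposition; it is quoted from Auslander--Smal\o{} (``the argument preceding Proposition 6.9''), so there is no in-paper proof to compare against and your attempt must stand on its own. Your forward direction does stand: if a maximal indecomposable injective $I$ has $\mathrm{pd}\, I\geq 2$, then $\ext^2(I,S)\neq 0$ for some simple $S$, the dimension shift along $0\to S\to J\to J/S\to 0$ produces a non-split extension of $I$ by $J/S\in Fac\,DA$, and maximality of $I$ forces any injective surjecting onto the middle term to split the sequence. That argument is complete (only the notational clash of calling the submodule $A$ should be avoided).

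The converse, however, has a genuine gap at its key step, namely the construction of a surjection $I_2\to M_2$ with $I_2$ a direct sum of maximal indecomposable injectives. Non-maximality of an indecomposable injective $I_k$ gives you a non-split epimorphism $J\to I_k$ with $J$ injective, but $J$ need not be indecomposable, and no single indecomposable summand of $J$ need surject onto $I_k$: the summands' images are only required to generate $I_k$ jointly (an indecomposable injective is uniform, but its top need not be simple, so no one image has to be everything). Hence the object you must substitute for $I_k$ is in general a direct sum $J_1\oplus\cdots\oplus J_n$, some of whose summands may well have dimension $\leq \dim I_k$, and your termination measure (``the dimension of that summand strictly increases'') collapses; nothing in the argument rules out a cycle among the indecomposable injectives under this covering relation. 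The statement that every object of $Fac\,DA$ admits an epimorphism from a direct sum of maximal indecomposable injectives is precisely the nontrivial content hiding in the Auslander--Smal\o{} argument and requires a real proof rather than the greedy replacement you describe. Once that cover is granted, the rest of your converse is correct: $\mathrm{pd}\,I_2\leq 1$ together with an injective presentation of $M_1$ gives $\ext^1(I_2,M_1)\cong\ext^2(I_2,K)=0$, the pullback splits, and $M$ is a quotient of $M_1\oplus I_2\in Fac\,DA$.
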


Let $A$ be a Nakayama algebra. That is, $A=kQ/I$,
where $Q$ is an oriented cycle, or a quiver of type $\mathbb{A}$
with linear orientation.

\begin{proposition}
Let $A$ be a Nakayama algebra. Then $FacDA$ is closed under extensions.
\end{proposition}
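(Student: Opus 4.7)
The plan is to invoke Proposition \ref{ASLemma}: it suffices to show that every maximal indecomposable injective $A$-module has projective dimension at most one. In fact I expect to prove the stronger statement that in a Nakayama algebra every maximal indecomposable injective is already projective.

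Since every indecomposable module over a Nakayama algebra is uniserial, I will write $c_i = \ell(P(i))$ and $c'_i = \ell(I(i))$ for the lengths of the indecomposable projectives and injectives, and let $\sigma$ denote the permutation of vertices induced by the arrows of the quiver of $A$ (so the composition factors of $P(i)$ from the top down are $S(i), S(\sigma(i)), S(\sigma^2(i)), \ldots$). Then $I(i)$ is uniserial with socle $S(i)$ and top $S(j)$ for $j = \sigma^{1-c'_i}(i)$, its projective cover is $P(j) \twoheadrightarrow I(i)$, and we have $c'_i \leq c_j$. The maximality of $I(i)$ translates into the condition that no indecomposable injective $I(k)$ with top $S(j)$ has length strictly greater than $c'_i$. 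The key step is to force $c_j = c'_i$, from which $I(i) \cong P(j)$ is projective.

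Supposing for contradiction that $c_j > c'_i$, the quotient $M = P(j)/\mathrm{rad}^{c'_i + 1}(P(j))$ is a uniserial module of length $c'_i + 1$ with top $S(j)$ and socle $S(\sigma(i))$. Since $I(\sigma(i))$ is the injective envelope of $S(\sigma(i))$, the embedding $M \hookrightarrow I(\sigma(i))$ gives $c'_{\sigma(i)} \geq c'_i + 1$. On the other hand, the dual Kupisch inequality $c'_{\sigma(i)} \leq c'_i + 1$, obtained from the embedding $I(\sigma(i))/\mathrm{soc} \hookrightarrow I(i)$, forces equality $c'_{\sigma(i)} = c'_i + 1$. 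A short index computation then identifies the top of $I(\sigma(i))$ as $S(\sigma^{1 - c'_{\sigma(i)}}(\sigma(i))) = S(j)$, so $I(\sigma(i))$ itself is an indecomposable injective with top $S(j)$ and length $c'_i + 1 > c'_i$, contradicting the maximality of $I(i)$.

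The main thing to watch out for is keeping the index arithmetic under control, in particular for the linear orientation, where $\sigma$ may be undefined at the sink vertex. In that edge case the composition factors of $P(j)$ simply cannot extend past the sink, so the assumption $c_j > c'_i$ is vacuously impossible and one concludes $c_j = c'_i$ directly. Either way, $I(i) \cong P(j)$, so $\mathrm{pd}\, I(i) = 0 \leq 1$, and Proposition \ref{ASLemma} completes the proof.
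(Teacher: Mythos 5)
Your proof is correct and takes essentially the same route as the paper: both reduce via Proposition \ref{ASLemma} to the claim that every maximal indecomposable injective over a Nakayama algebra is projective. The paper merely asserts this last fact in one line, whereas you supply a complete and correct Kupisch-series argument for it, including the sink-vertex edge case in the linear orientation.
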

\begin{proof}
A maximal injective $A$-module is projective. The proposition then
follows from Proposition \ref{ASLemma}.
\end{proof}

Finailly, we given an example showing that filtrations with factors in $\F i$ do
not exist in general. Let $$\Delta:\xymatrix{1 \ar[r] & 2 \ar[r] & 3 \ar[r] & 4 }$$
and let $A$ be the path algebra of $\Delta$ with a relation equal to the path from
vertex $2$ to $4$. Then $A$ has global dimension two. Let $T=DA$.
Then $S_4\in \F 2$, $I_4\in \F 0$ and $S_3$ is the cokernel of the inclusion
$S_4\longrightarrow I_4$, and so $S_3\in \K 0$, but not in $\F 0$.

\bibliographystyle{annotate}

{\parindent=0cm
Bernt Tore Jensen \\
Institut de mathematique de Jussieu, \\
UMR 7586, \\
175 rue Chevaleret, \\
75013, Paris,\\
France\\\\}

{\parindent=0cm
Dag Madsen \\
Department of Mathematics, \\
215 Carnegie,
Syracuse University,\\
Syracuse, NY 13244-1150,\\
USA \\\\}

{\parindent=0cm
Xiuping Su \\
Mathematical Sciences, \\
University of Bath,\\
Bath BA2 7JY,\\
United Kingdom.\\
email: xs214@bath.ac.uk }

\end{document}